\theoremstyle{plain}
\newtheorem{thm}{Theorem}[section]
\newtheorem{lemma}[thm]{Lemma}
\newtheorem{cor}[thm]{Corollary}
\theoremstyle{definition}
\newtheorem{defi}[thm]{Definition}
\newtheorem{rmk}[thm]{Remark}
\newtheorem{example}[thm]{Example}
\newcommand{\frb}{\mathfrak{b}}
\newcommand{\frg}{\mathfrak{g}}
\newcommand{\frh}{\mathfrak{h}}
\newcommand{\fri}{\mathfrak{i}}
\newcommand{\frn}{\mathfrak{n}}
\newcommand{\frp}{\mathfrak{p}}
\newcommand{\fru}{\mathfrak{u}}
\newcommand{\bbC}{\mathbb{C}}
\newcommand{\bbN}{\mathbb{N}}
\newcommand{\bbR}{\mathbb{R}}
\newcommand{\bbZ}{\mathbb{Z}}
\newcommand{\caA}{\mathcal{A}}
\newcommand{\caF}{\mathcal{F}}
\newcommand{\caL}{\mathcal{L}}
\begin{document}

\title{Ad-nilpotent ideals and The Shi arrangement}

\author{Chao-Ping Dong}

\address[Dong]{School of Mathematics,  Shandong University,
Jinan 250100, China}
\email{chaoping@sdu.edu.cn}

\abstract{We extend the Shi bijection from the Borel subalgebra case
to  parabolic subalgebras. In the process,   the $I$-deleted Shi arrangement $\texttt{Shi}(I)$ naturally emerges. This arrangement
interpolates between the Coxeter arrangement $\texttt{Cox}$ and the
Shi arrangement $\texttt{Shi}$, and breaks the symmetry of
$\texttt{Shi}$ in a certain symmetrical way. Among other things, we
determine the characteristic polynomial $\chi(\texttt{Shi}(I), t)$
of $\texttt{Shi}(I)$ explicitly for $A_{n-1}$ and $C_n$. More
generally, let $\texttt{Shi}(G)$ be an arbitrary arrangement between
$\texttt{Cox}$ and $\texttt{Shi}$. Armstrong and Rhoades recently gave a
formula for $\chi(\texttt{Shi}(G), t)$ for $A_{n-1}$.
Inspired by their result, we obtain formulae for
$\chi(\texttt{Shi}(G), t)$ for $B_n$, $C_n$ and $D_n$.}

\endabstract

\subjclass[2010]{Primary 05Exx, 17B20}

\keywords{ad-nilpotent ideal, Shi arrangement, quasi-antichain, characteristic polynomial}

\maketitle


\section{Introduction}
Let $\frg$ be a finite-dimensional complex simple Lie algebra of rank $l$. Fix a Cartan subalgebra $\frh$ of $\frg$. Then we have the root system $\Delta=\Delta(\frg, \frh)$. Let $V$ be the real vector space spanned by $\Delta$. We denote by $\langle\, ,\,\rangle$ the canonical inner product on $V$ which  is  induced from $\frh$ of the Killing form of $\frg$.
For convenience, we will equip $V$ with an inner product $(\, , \,)$ which is a  suitable scalar multiple of the canonical one. For any root $\alpha\in\Delta$, let $\frg_{\alpha}$ be the root space relative to $\alpha$. Let $\Pi=\{\alpha_1, \cdots, \alpha_l \}  \subseteq \Delta^{+}$ be a fixed choice of simple and positive root systems of $\Delta$, respectively. Let $\frn=\bigoplus_{\alpha\in\Delta^{+}}\frg_{\alpha}$. Then $\frb=\frh\oplus\frn$ is a Borel subalgebra of $\frg$.

The abelian ideals of a Borel subalgebra were studied by Kostant
\cite{K1, K2} in connection with the representation theory of
semisimple Lie groups. In particular, D.~Peterson's following theorem was
detailed in \cite{K2}: there are $2^{l}$ abelian ideals of $\frb$,
regardless of the type of $\frg$. Peterson's approach was to give a
bijection between the abelian ideals of $\frb$ and a certain set of
elements in the affine Weyl group $\widehat{W}$ of $\frg$. This surprising result led Cellini and Papi to find
similarities for \emph{ad-nilpotent} ideals of $\frb$, i.e., the
ideals of $\frb$ which are contained in $\frn$.  For example, they showed how to
associate to any ad-nilpotent ideal $\fri$ of $\frb$ a uniquely
determined element $w_{\fri}\in \widehat{W}$ in \cite{CP1}. In
\cite{CP2}, they gave further a bijection between the set of
ad-nilpotent ideals of $\frb$ and the set of $W$-orbits in
$\check{Q}/(h+1)\check{Q}$, where $\check{Q}$ is the coroot lattice and $h$ is the Coxeter number.

On the other hand, we note that  a bijection between the
set of all ad-nilpotent ideals of $\frb$ and the dominant regions of
the now-called Shi arrangement had been given by  Shi in \cite{S3}. Thus, Theorems 3.2 and 3.6 there count the number of all ad-nilpotent ideals of $\frb$.  To state his result,
let us recall some notations concerning hyperplane arrangements.

A \emph{hyperplane arrangement} is a finite collection of affine hyperplanes in an Euclidean space. For example, the \emph{Coxeter arrangement}
  associated with $\Delta^{+}$ is the arrangement in $V$ defined by
\begin{equation}\label{Coxeter-arrangement}
\texttt{Cox}:=\{ H_{\alpha, 0}\mid \alpha\in\Delta^{+}\}.
\end{equation}
Here for $\alpha\in\Delta^{+}$ and $k\in\bbZ$, we define a hyperplane
\begin{equation}\label{H-alpha-k}
H_{\alpha, k}:=\{ v\in V \mid (v, \alpha)=k  \}.
\end{equation}
If $\mathcal{A}$ is a hyperplane arrangement in $V$, the connected components of
$V - \bigcup_{H\in \mathcal{A}}H$ are called \emph{regions}. For example, there are $|W|$ regions of \texttt{Cox}, where $W$ is the Weyl group associated to $\Delta(\frg, \frh)$. For later use, let us single out the \emph{dominant region} of \texttt{Cox} as follows:
\begin{equation}\label{Cox-dominant-region}
V_{\infty}:=\{ v\in V \mid (v, \alpha)>0, \forall\alpha\in\Delta^{+} \}.
\end{equation}

By the idea of Postnikov and Stanley in \cite{PS}, a \emph{deformation} of the Coxeter arrangement  is an affine arrangement each of whose hyperplanes is parallel to some one in $\texttt{Cox}$.
The \emph{Shi arrangement} \texttt{Shi} associated to $\Delta^{+}$ can be viewed as such an example:
\begin{equation}\label{Shi-arrangement}
\texttt{Shi}:=\texttt{Cox} \cup \{ H_{\alpha, 1}\mid \alpha\in \Delta^{+}\}.
\end{equation}
This arrangement was defined by Shi
in the study of the Kazhdan-Lusztig cellular structure of the affine
Weyl group of type $A$, see Chapter 7 of \cite{S1}. A region of \texttt{Shi} is called
\emph{dominant} if it is contained in $V_{\infty}$.
For any $\frh$-stable subset  $\fru$ of $\frg$, let $\Phi_{\fru}\subset \Delta$ be the subset defined so that
$$\fru=\fru\cap\frh + \sum_{\alpha\in\Phi_{\fru}} \frg_{\alpha}.
$$
Now let us cite the Shi bijection from Theorem 1.4 of \cite{S3} as
follows:

\begin{thm} \emph{(\textbf{Shi})}\label{thm-Shi}
There exists a natural bijective map from the set of all the ad-nilpotent
ideals of $\frb$ to the set of all the dominant regions of the
hyperplane arrangement \emph{\texttt{Shi}}. The map sends $\fri$ to
$\{ v\in V_{\infty} \mid  (v, \beta) >1, \forall \beta\in
\Phi_{\fri};  (v, \beta) <1, \forall \beta\in
\Delta^{+}\setminus\Phi_{\fri} \}$.
\end{thm}

The first purpose of this paper is to generalize the Shi bijection from the Borel subalgebra case to parabolic subalgebras, see Theorem \ref{thm-main}.  In the process, the $I$-deleted Shi arrangement $\texttt{Shi}(I)$ naturally emerges,  where $I$ is an arbitrary subset of $\Pi$. This arrangement interpolates between $\texttt{Cox}$ and $\texttt{Shi}$, see (\ref{I-deleted-Shi-arrangement}).  More generally, let us consider
\begin{equation}\label{Shi-G}
\texttt{Shi}(G)=\texttt{Cox}\cup\{H_{\alpha, 1}\mid \alpha\in G\},
\end{equation}
where $G$ is any subset of $\Delta^{+}$.

Recall that the fundamental combinatorial object associated with  a hyperplane arrangement $\caA$ in $V$ is its \emph{intersection poset} $L(\caA)$,
 which is defined as the set of nonempty intersections of hyperplanes from $\caA$, partially ordered by the \emph{reverse} inclusion of subspaces. As an invariant distilled from $\caL(\caA)$, the \emph{characteristic polynomial} $\chi(\caA, t)\in\bbZ[t]$ of  $\caA$ is defined by
\begin{equation}\label{def-char-poly}
\chi(\caA, t)=\sum_{x\in L(\caA)} \mu(V, x)  t^{\text{dim} (x)},
\end{equation}
where $\mu: L(\caA)\times L(\caA)\to \bbZ$ is the M\"obius function of the poset $L(\caA)$, see (3.15) and section 3.11 of \cite{St}.

When $\frg=A_{n-1}$, by the finite field method of Crapo and Rota \cite{CR},  Armstrong and Rhoades recently gave a formula for $\chi(\texttt{Shi}(G), t)$ in \cite{AR}. See Theorem \ref{thm-AR}. Analyzing its proof, one sees that there are two key features: for a large prime $p$, let $f(S)$ be the number of vectors in $\mathbb{F}_p^l -\texttt{Cox}_p$ satisfying $(H_{\alpha, 1})_p$ for all $\alpha\in S$. Then the first feature is that $\chi(\texttt{Shi}(G), p)$ can be expressed as an alternating sum
\begin{equation}\label{alt-sum}
\sum_{S\subseteq G} (-1)^{|S|} f(S),
\end{equation}
and  the summation is reduced to certain subsets of $G$. The second one is that $f(S)$   is shown to be dependent only on $|S|$.

We find a uniform way to express the first feature. Indeed, as
recorded in Lemma \ref{lemma-sum-quasi-antichain}, regardless of the type of $\frg$, one can
always express $\chi(\texttt{Shi}(G), p)$ as  (\ref{alt-sum}), and
it suffices to take the summation over the \emph{quasi-antichains of $\Delta^{+}$} (cf. Definition \ref{def-quasi-antichain}) contained in $G$. A detailed study of quasi-antichains will be given in Section 3. In particular, we will show that  they are in bijection with  the elements of $\caL(\texttt{Cox})$. When $\frg=A_{n-1}$, $B_n$, $C_n$ or $D_n$, we put
\begin{equation}\label{Stir-uniform}
\texttt{Stir}(G, k) := \# \Big\{ \mbox{quasi-antichains of }
\Delta^{+} \mbox{ contained in } G \mbox{ with size } n-k\Big\}.
\end{equation}
In particular if $\frg=A_{n-1}$, as we shall see in (\ref{Stir-A}), the definition (\ref{Stir-uniform}) agrees with the original terminology $\texttt{Stir}(G, k)$ in \cite{AR}, and $\texttt{Stir}(\Delta^+, n-k)$ is nothing but  the number of partitions
of $[n]:=\{1, 2, \cdots, n\}$ into $k$ blocks.  The latter is usually denoted by $S(n, k)$, and termed as  the
\emph{Stirling number of the second kind}.  Recall from (1.93)
and (1.94d) of \cite{St} that we have the recurrence
\begin{equation}\label{stir-recur-A}
S(n, k)=k S(n-1, k)+ S(n-1, k-1),
\end{equation}
and the combinatorial identity
\begin{equation}\label{Gamman-identity} \sum_{k=0}^{n} S(n,
k)(t)_k=t^{n}.
\end{equation}
Here $(t)_k=t(t-1)\cdots(t-k+1)$ is the falling factorial. It is  interpreted  as $1$ whenever $k\leq 0$, and  will be used throughout this paper.  We will give some analogs of
(\ref{stir-recur-A}) and (\ref{Gamman-identity}) in Lemma \ref{lemma-Tn-Lambdan}.

By explicit calculations, we find that the second feature still holds
when $\frg$ is $B_n$ or $C_n$. This leads us to

\begin{thm}
\emph{(Theorem \ref{thm-ShiG-type-C} and Remark \ref{rmk-type-B})}
\label{thm-ShiG-type-BC}
Let $\frg$ be $B_n$ or $C_n$, $n\geq 2$.
For any subset $G\subseteq \Delta^+$, the characteristic polynomial of $\emph{\texttt{Shi}}(G)$ is given by
$$
\chi(\emph{\texttt{Shi}}(G), t)=\sum_{k=0}^{n} (-1)^k \emph{\texttt{Stir}}(G, n-k)2^{n-k}(\frac{t-1}{2}-k)_{n-k}.
$$
\end{thm}

 When $\frg=D_n$, $f(S)$ no longer  depends only on $|S|$.  However, by
a more careful analysis, we still obtain a formula for
$\chi(\texttt{Shi}(G), p)$, see Theorem
\ref{thm-ShiG-type-D}.
Our explicit calculations also show that when $\frg$ is classical and $p$ is large enough,  $f(S)$ is nonzero if and only if $S$ is a quasi-antichain of $\Delta^+$, see Remark \ref{rmk-quasi-antichain}. Thus the reduction of the alternating sum (\ref{alt-sum}) to quasi-antichains  turns out to be precise.

Now let us specialize the general story to the $I$-deleted Shi arrangement $\texttt{Shi}(I)$. Going from $\texttt{Shi}$ to $\texttt{Shi}(I)$, the symmetry is broken kind of symmetrically. Thus we may expect $\texttt{Shi}(I)$ to behave better than an arbitrary $\texttt{Shi}(G)$.
Indeed, when $\frg=A_{n-1}$, we find that: the polynomial $\chi(\texttt{Shi}(I), t)$ factors into nonnegative integers and it depends only on $|I|$; moreover, the cone of \texttt{Shi}$(I)$ is free in the sense of Terao \cite{Te}, see Theorem \ref{thm-type-A}. These results are based on the works of Athanasiadis \cite{At1, At2}, Armstrong and Rhoades \cite{AR}. When $\frg=C_n$, we find that: $\chi(\texttt{Shi}(I), t)$ always factors into nonnegative integers; moreover, if $I$ contains $2e_n$, $\chi(\texttt{Shi}(I), t)$ depends only on $|I|$, and the same conclusion holds if $I$ does not contain $2e_n$, see Theorem \ref{thm-type-C}. These results are obtained via Theorem \ref{thm-ShiG-type-BC} and Lemma \ref{lemma-Tn-Lambdan}.

The paper is organized as follows: we generalize the Shi bijection in Section 2.
We collect some preliminaries on the characteristic polynomial of a hyperplane arrangement in Section 3.
In particular, the finite field method is described there. Moreover, we introduce the quasi-antichains and give a detailed study of them.
Section 4 is devoted to the study of \texttt{Shi}($I$) for $A_{n-1}$, while Section 5 handles the $C_n$ case.
 Finally, a formula for $\chi(\texttt{Shi}(G), t)$ is deduced for $D_n$ in Section 6.

\section{A generalization of the Shi bijection}

This section is devoted to  generalizing the Shi bijection (see Theorem \ref{thm-Shi}) from the Borel subalgebra case to parabolic subalgebras. Let us begin with
some preliminaries. We endow $\Delta^{+}$ with the usual
partial ordering. Namely, $\alpha\leq\beta$ if
$\beta-\alpha=\sum_{\gamma\in\Delta^{+}} c_{\gamma}\gamma$, where
the $c_{\gamma}$ are some non-negative real numbers. Any subset $P$ of $\Delta^{+}$ inherits a
partial ordering from $(\Delta^{+}, \leq)$. Let us denote the
corresponding poset by $(P, \leq)$ or simply by $P$.
 Recall that a \emph{dual order ideal} of $P$ is a subset $J$ of $P$ such that if $t\in J$ and $t\leq s$ for $s\in P$, then $s\in J$. Recall also that an \emph{antichain} of $P$
 is a subset of $P$ consisting of pairwise non-comparable elements.
 Note that there is a canonical bijection  between the dual order ideals of $P$ and the antichains of $P$: given a dual order ideal,  we send it to the set of its minimal elements; the inverse map sends the antichain $\{a_1, \cdots, a_k\}$ to the dual order ideal which is the union of the principal dual order ideals $V_{a_1}, \cdots, V_{a_k}$, where $V_{a}=\{b\in P\mid a\leq b\}$.

Fix a subset $I\subseteq \Pi$.
Let $\Delta_I$ be the sub root system of $\Delta$ spanned by $I$, and put $\Delta^{+}_I=\Delta_I\cap \Delta^{+}$. Let
$$
\frp_I=\frh+ \sum_{\alpha\in \Delta_I\cup \Delta^{+}} \frg_{\alpha}
$$
be the standard parabolic subalgebra of $\frg$ corresponding to $I$.
Recall that an ideal $\mathfrak{i}$ of $\frp_{I}$ is called \emph{ad-nilpotent} if for all $x\in\fri$,
$\text{ad}_{\frp_I} x$ is nilpotent. Let
 \begin{equation}\label{C-I}
 C_I=\{\beta\in \Delta^{+}\setminus \Delta_I \mid \forall \alpha\in \Delta_{I}^{+},  \beta-\alpha\notin \Delta \}.
  \end{equation}
We define the \emph{$I$-deleted Shi arrangement} as
\begin{equation}\label{I-deleted-Shi-arrangement}
\texttt{Shi}(I):=\texttt{Cox} \cup \{H_{\alpha, 1}\mid \alpha\in C_I\}.
\end{equation}
Since \texttt{Shi}($\Pi$) is \texttt{Cox} and \texttt{Shi}($\emptyset$) is \texttt{Shi}, we see that \texttt{Shi}($I$) interpolates between the Coxeter arrangement and the Shi arrangement.
Again a region of \texttt{Shi}($I$) is called dominant if it is
contained in $V_{\infty}$.  For any
ad-nilpotent ideal $\fri$ of $\frp_I$, we put $
\Psi_{\fri}=\cup_{\alpha} \{\beta\in C_I\mid  \alpha \leq \beta \}$,
where $\alpha$ runs over all the minimal elements of $(\Phi_{\fri},
\leq)$. Then we have

\begin{thm}\label{thm-main} There exists a
natural bijective map from the set of all the ad-nilpotent ideals of
$\frp_I$ to the set of all the dominant regions of the hyperplane
arrangement \emph{\texttt{Shi}}($I$). This map sends $\fri$ to $\{
v\in V_{\infty} \mid (v, \beta) >1, \forall \beta\in \Psi_{\fri};
(v, \beta) <1, \forall \beta\in C_I \setminus\Psi_{\fri} \}$.
\end{thm}

We note that the number of the ad-nilpotent ideals for $\frp_I$ was enumerated by Righi in Theorem 5.12 of \cite{R1} for $\frg$
classical, and in \cite{R2} for $\frg$ exceptional using GAP4. As a
consequence, we also know the number of dominant regions of
$\texttt{Shi}(I)$. Theorem \ref{thm-main} will be proved by collecting the bijections in the following subsections.

\subsection{} Recall that a subset $I$ of $\Pi$ is fixed. Let us put
$$\caF_I :=\{ \Phi\subseteq\Delta^{+}\setminus \Delta_I: \mbox{ if } \alpha\in\Phi, \beta\in\Delta^{+}\cup\Delta_I \mbox{ and } \alpha+\beta\in\Delta^{+}, \mbox{ then } \alpha+\beta\in \Phi \}.
$$
As noted in section 3 of \cite{R1}, we have a bijection
\begin{equation}\label{map-1}
\{\mbox{ad-nilpotent ideals of } \frp_I\}\to \caF_I; \fri\mapsto \Phi_{\fri}.
\end{equation}

\subsection{} For any $\Phi\in \caF_I$, let $A(\Phi)$ be the set of all the minimal elements of $(\Phi, \leq)$. We note that $A(\Phi)$ is contained in $C_I$. Indeed, let $\beta$ be any minimal element of $(\Phi, \leq)$, and take any $\alpha\in \Delta^{+}_I$, it suffices to show that $\beta-\alpha$ is not a root. Assuming  the contrary gives
$\beta+(-\alpha)\in \Delta^{+}\setminus \Delta_I$.
Since $\beta\in\Phi$, $-\alpha\in \Delta^{+}\cup\Delta_I$ and $\Phi\in\caF_I$, we conclude that $\beta-\alpha\in\Phi$, which contradicts to the minimality of $\beta$.
Thus  $A(\Phi)$ is an antichain of $(C_I, \leq)$ and we have a well-defined  map
\begin{equation}\label{map-2}
\caF_I\to \{\mbox{antichains of } C_I\}; \Phi\mapsto A(\Phi).
\end{equation}
Actually, the above map is bijective. To show this, it suffices to prove that for any antichain $A$ of $C_I$, the set
$$
 \Phi(A)=\bigcup_{\beta\in A}\{\alpha\in \Delta^+: \alpha\geq\beta\}
$$
belongs to $\caF_I$, which is precisely
the content of Proposition 1.4 of \cite{CDR} in view of the following

\begin{lemma}\label{lemma-CI} We have $C_I=\{\beta\in \Delta^{+}\setminus \Delta_I \mid \forall \alpha\in I,  \beta-\alpha\notin \Delta \}$.
\end{lemma}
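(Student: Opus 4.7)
The plan is to establish the nontrivial inclusion, since the reverse one is immediate from $I\subseteq \Delta_I^{+}$. Suppose $\beta\in \Delta^{+}\setminus \Delta_I$ satisfies $\beta-\alpha\notin \Delta$ for every simple $\alpha\in I$, and assume for contradiction that there exists $\alpha_0\in \Delta_I^{+}$ with $\beta-\alpha_0\in \Delta$. Choose such an $\alpha_0$ of minimal height; the standing hypothesis rules out $\mathrm{ht}(\alpha_0)=1$, so by the standard fact that every positive root of a sub root system admits a simple-root ``peeling'' one may write $\alpha_0=\alpha'+\gamma$ with $\alpha'\in \Delta_I^{+}$ and $\gamma\in I$, where $\mathrm{ht}(\alpha')<\mathrm{ht}(\alpha_0)$.

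By the minimality of $\alpha_0$, neither $\beta-\alpha'$ nor $\beta-\gamma$ is a root; moreover $\beta\neq \alpha'$ and $\beta\neq \gamma$ because $\beta\notin \Delta_I$, so in fact $\beta-\alpha'$ and $\beta-\gamma$ both lie outside $\Delta\cup\{0\}$. The goal then is to derive a contradiction with the relation $\beta=(\beta-\alpha_0)+\alpha'+\gamma\in \Delta$.

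The main tool I would invoke is the standard root-system lemma: whenever $\mu,\nu,\mu+\nu,\delta,\delta+\mu+\nu$ all lie in $\Delta$, at least one of $\delta+\mu$ and $\delta+\nu$ lies in $\Delta\cup\{0\}$. The cleanest proof uses the Jacobi identity applied to nonzero root vectors $x_\mu\in \frg_\mu$, $x_\nu\in \frg_\nu$, $x_\delta\in \frg_\delta$: the bracket $[[x_\mu,x_\nu],x_\delta]$ is a nonzero element of $\frg_{\delta+\mu+\nu}$, so the identity
\[
[[x_\mu,x_\nu],x_\delta]+[[x_\nu,x_\delta],x_\mu]+[[x_\delta,x_\mu],x_\nu]=0
\]
forces one of $[x_\nu,x_\delta]$ and $[x_\delta,x_\mu]$ to be nonzero, whence $\delta+\nu\in \Delta\cup\{0\}$ or $\delta+\mu\in \Delta\cup\{0\}$. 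Applying this with $\delta=\beta-\alpha_0$, $\mu=\alpha'$, $\nu=\gamma$ produces the contradiction.

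The main obstacle, such as it is, is purely bookkeeping: confirming that $\beta-\alpha'$ and $\beta-\gamma$ truly fall outside $\Delta\cup\{0\}$, which is exactly where the hypothesis $\beta\in \Delta^{+}\setminus \Delta_I$ is essential (it guarantees the nonvanishing and feeds into the inductive minimality). The argument is type-free and requires no case analysis on the Lie type of $\frg$, so it sits comfortably alongside the uniform flavour of Theorem 2.
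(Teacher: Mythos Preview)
Your argument is correct, and its inductive skeleton (take a minimal offender $\alpha_0\in\Delta_I^+$ with $\beta-\alpha_0\in\Delta$, peel off a simple $\gamma\in I$) coincides with the paper's. The difference is in the punchline. The paper stays inside root-system combinatorics: from $\beta-\alpha\notin\Delta$ for $\alpha\in I$ it extracts $(\beta,\alpha)\le 0$, then with the chosen $\alpha$ satisfying $(\gamma,\alpha)>0$ (so that $\gamma-\alpha\in\Delta_I^+$) it gets $(\beta-\gamma,\alpha)<0$, whence $\beta-\gamma\in\Delta$ would force $\beta-(\gamma-\alpha)\in\Delta$, contradicting the induction hypothesis. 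You instead invoke the Jacobi-identity lemma that if $\mu,\nu,\mu+\nu,\delta,\delta+\mu+\nu$ are roots then one of $\delta+\mu,\delta+\nu$ lies in $\Delta\cup\{0\}$. Both are standard; the paper's route is a touch more elementary (no appeal to the Lie algebra, only root strings and the inner product), while yours packages the step into a single clean ``three-root'' lemma that some readers may find more memorable. Either way the proof is type-free and short.
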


\begin{proof}
  Fix any $\beta\in\Delta^+\setminus \Delta_I$ such that $\beta-\alpha$ is not a root for any $\alpha\in I$, it suffices to show that $\beta-\gamma$ is not a root for any $\gamma\in\Delta_I^{+}$. Let us prove this by induction on the height of $\gamma$. There is nothing to prove when $\mbox{ht}(\gamma)=1$. Suppose that we have proved it for all $\gamma^{\prime}\in \Delta_I^+$ with $\mbox{ht}(\gamma^{\prime})<r$. Now take  $\gamma\in \Delta_I^+$ be such that $\mbox{ht}(\gamma)=r$. Choose $\alpha\in I$ such that $(\gamma, \alpha)>0$. Then $\gamma -\alpha\in \Delta_I^+$. By assumption, $\beta-\alpha$ is not a root, thus  $(\beta, \alpha)\leq 0$ and
$(\beta-\gamma, \alpha) <0$. Thus $\beta-\gamma$ is not a root since otherwise $\beta-(\gamma-\alpha)$ would be a root, contradicting to the induction hypothesis since  $\mbox{ht}(\gamma-\alpha)=r-1$.
\end{proof}

\subsection{} We take the canonical bijection from $\{ \text{antichains of } C_I \}$ to $\{ \text{dual order ideals of } C_I \}$.

\subsection{}
Given any dual order ideal $\Phi$ of $C_I$,  define
$$R_{\Phi}=\{ v\in V_{\infty} \mid (v, \beta) >1, \forall \beta\in \Phi;  (v, \beta) <1, \forall \beta\in C_I\setminus\Phi \}.$$ Then the map
\begin{equation}\label{map-4}
\{ \mbox{dual order ideals of } C_I \}\to \{ \mbox{dominant regions of } \texttt{Shi}(I)   \}; \Phi\mapsto R_{\Phi}.
\end{equation}
is well-defined in view of the following

\begin{lemma} The set $R_{\Phi}$ is non-empty. Hence it is a dominant region of $\emph{\texttt{Shi}}(I)$.
\end{lemma}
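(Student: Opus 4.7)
The plan is to reduce this non-emptiness statement to Shi's classical bijection (Theorem 1) by lifting the dual order ideal $\Phi$ of $C_I$ to a dual order ideal of the larger poset $\Delta^{+}$, whose associated dominant Shi region can be shown to lie inside $R_{\Phi}$.

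To carry this out, I would first set $A := A(\Phi) \subseteq C_I$, the antichain of minimal elements of $\Phi$; since the partial order on $C_I$ is inherited from $\Delta^{+}$, the set $A$ remains an antichain of $\Delta^{+}$. Next, form
$$
\tilde\Phi := \bigcup_{\beta \in A} \{\alpha \in \Delta^{+} : \alpha \geq \beta\}.
$$
By construction $\tilde\Phi$ is an upper set of $\Delta^{+}$, and a brief check shows it belongs to $\caF_{\emptyset}$: if $\alpha \in \tilde\Phi$ and $\alpha + \gamma \in \Delta^{+}$ for some $\gamma \in \Delta^{+}$, then $\alpha + \gamma \geq \alpha$ forces $\alpha + \gamma \in \tilde\Phi$. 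Via (\ref{map-1}) applied with $I = \emptyset$, $\tilde\Phi$ corresponds to an ad-nilpotent ideal of $\frb$. Theorem 1 then produces a vector $v \in V_{\infty}$ satisfying $(v, \beta) > 1$ for every $\beta \in \tilde\Phi$ and $(v, \beta) < 1$ for every $\beta \in \Delta^{+} \setminus \tilde\Phi$.

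The final step is to verify $\tilde\Phi \cap C_I = \Phi$. Unwinding definitions,
$$
\tilde\Phi \cap C_I = \bigcup_{\beta \in A} \{\alpha \in C_I : \alpha \geq \beta\},
$$
which is precisely the dual order ideal of $C_I$ generated by $A$, hence equals $\Phi$. Consequently, the conditions $(v, \beta) > 1$ for $\beta \in \Phi$ and $(v, \beta) < 1$ for $\beta \in C_I \setminus \Phi$ hold for the $v$ above, so $v \in R_{\Phi}$; thus $R_{\Phi}$ is non-empty and, lying in $V_\infty$ and being cut out solely by hyperplanes of $\texttt{Shi}(I)$, is in fact a dominant region.

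The only substantive step in this plan is the appeal to Theorem 1: without it, producing a single $v$ meeting all the inequalities simultaneously would demand an explicit construction (for instance a carefully weighted sum of fundamental coweights), whose positivity analysis becomes entangled with the combinatorics of the antichain $A$. The lifting trick sidesteps this difficulty by delegating the existence of $v$ to Shi's theorem and then observing that passing from the larger constraint set indexed by $\Delta^{+}$ to the smaller one indexed by $C_I$ can only enlarge the region being cut out.
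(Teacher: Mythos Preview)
Your proof is correct and follows essentially the same approach as the paper: both lift $\Phi$ to the dual order ideal $\widetilde{\Phi}$ of $\Delta^{+}$ it generates, invoke Theorem~1 to obtain a point in $R_{\widetilde{\Phi}}$, and observe that $R_{\widetilde{\Phi}}\subseteq R_{\Phi}$ via the identity $\widetilde{\Phi}\cap C_I=\Phi$ (equivalently, $C_I\setminus\Phi\subseteq\Delta^{+}\setminus\widetilde{\Phi}$). The only cosmetic difference is that you pass through the antichain $A$ of minimal elements to define $\widetilde{\Phi}$, whereas the paper defines it directly as $\bigcup_{\beta\in\Phi}\{\alpha\in\Delta^{+}:\alpha\geq\beta\}$; these are of course the same set.
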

\begin{proof}
Let $\widetilde{\Phi}$ be the unique dual order ideal of $\Delta^{+}$ containing $\Phi$, that is,
$$
\widetilde{\Phi}=\bigcup_{\beta\in \Phi}\{\alpha\in \Delta^+: \alpha\geq\beta\}.
$$ We claim that
$C_I\setminus \Phi \subseteq \Delta^{+}\setminus \widetilde{\Phi}$.
Indeed, if there exists $\alpha\in C_I\setminus \Phi$ such that $\alpha\in \widetilde{\Phi}$, then $\alpha=\beta+\gamma$ for some $\beta\in\Phi$ and $\gamma\in\Delta^{+}$. Since $\alpha\in C_I$ and $\Phi$ is a dual order ideal of $C_I$, this would imply that $\alpha\in\Phi$, which is absurd. Thus the claim follows, and we have
$R_{\widetilde{\Phi}}\subseteq R_{\Phi}$, where
$R_{\widetilde{\Phi}}=\{ v\in V_{\infty} \mid (v, \beta) >1, \forall \beta\in \widetilde{\Phi};  (v, \beta) <1, \forall \beta\in \Delta^{+}\setminus\widetilde{\Phi} \}$.
Note that $R_{\widetilde{\Phi}}$ is non-empty by Theorem \ref{thm-Shi}. Thus, $R_{\Phi}$ is non-empty as well. Then it is immediate that $R_{\Phi}$ is a dominant region of $\texttt{Shi}(I)$.
\end{proof}

Given any dominant region $R$ of $\texttt{Shi}(I)$, we say that $R$ is \emph{above} the hyperplane $H_{\beta, 1}$, $\beta\in\Delta^{+}$, if it contains $R$ and the origin of $V$  in different half spaces. Now define
$$\tau(R)=\{ \beta\in C_I \mid R \text{ is above the hyperplane } H_{\beta, 1}  \}.$$
We note that $\tau(R)$ is a dual order ideal of $C_I$. Indeed, take any $\gamma\in C_I$ such that $\beta\leq\gamma$ for some $\beta\in\tau(R)$,  then since $R$ is contained in $V_{\infty}$, we have
$$
(v, \gamma)=(v, \beta) + (v, \gamma-\beta) \geq (v, \beta)>1, \forall v\in R.
$$
Therefore, $\gamma\in\tau(R)$ as desired. Now it is obvious that the map (\ref{map-4}) is bijective with the inverse given by $\tau$.

\section{Characteristic polynomial, the finite field method and quasi-antichains}

Let $\caA$ be a collection of hyperplanes in the Euclidean space $\bbR^n$. In this section, we will collect some preliminaries concerning the determination of $\chi(\caA, t)$. In particular, the finite field method due to Crapo and Rota \cite{CR} will be described. Moreover, we will introduce quasi-antichains, and give a detailed study of them.

\subsection{Characteristic polynomial and Poincar\'e polynomial}

The \emph{Poincar\'e polynomial} of $\caA$ is defined by
$$
P(\caA, t)=\sum_{x\in L(\caA)} \mu(V, x)  (-t)^{n- \text{dim} (x)}.
$$
Comparing it with (\ref{def-char-poly}), one sees easily that
$$
P(\caA, t)=(-t)^n \chi(\caA, -\frac{1}{t}) \mbox{ and }
\chi(\caA, t)=t^n P(\caA, -\frac{1}{t}).
$$

\begin{example} Consider $G_2$. Let $\Pi=\{\alpha_1, \alpha_2\}$, where $\alpha_1$ is the short simple root and $\alpha_2$ is the long simple root. Then one can easily compute from definition that
\begin{itemize}
\item[$\bullet$] If $I=\{\alpha_1\}$, then $\chi(\texttt{Shi}(I), t)=(t-3)(t-5)$;
\item[$\bullet$] If $I=\{\alpha_2\}$, then $\chi(\texttt{Shi}(I), t)=(t-4)(t-5)$.
\end{itemize}
\end{example}
\begin{rmk}
Since the negative of the coefficient of $t$ in $\chi(\texttt{Shi}(I), t)$ is $|C_I|+|\Delta^{+}|$, in general, we shall not expect $\chi(\texttt{Shi}(I), t)$ to depend only on the cardinality of $I$.
\end{rmk}

Suppose that the normals to the hyperplanes of $\caA$ span a subspace of $V\subseteq \bbR^n$ of dimension $r(\caA)$. We call $r(\caA)$ the \emph{rank} of $\caA$. We say a region of $\caA$ is \emph{relatively bounded} if its intersection with $V$ is bounded.
When $V=\bbR^n$, a region of $\caA$ is relatively bounded if and only if it is bounded.
A good reason to study the characteristic polynomial for hyperplane arrangements is given by the following classic theorem of Zaslavsky.

\begin{thm}\label{thm-Za} \emph{(Section 2 of \cite{Za})} For any hyperplane in $\bbR^n$, we have
\begin{itemize}
\item[$\bullet$] The number of regions of $\caA$ is $(-1)^n\chi(\caA, -1)$;
\item[$\bullet$]
 The number of relatively bounded regions of $\caA$ is $(-1)^{r(\caA)} \chi(\caA, 1)$.
\end{itemize}
\end{thm}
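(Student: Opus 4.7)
The plan is to prove both statements simultaneously by induction on $|\caA|$ via deletion--restriction. Fix any $H_0 \in \caA$ and set $\caA' = \caA \setminus \{H_0\}$ and $\caA'' = \{H_0 \cap H : H \in \caA',\ H_0 \cap H \neq \emptyset\}$, the latter regarded as a hyperplane arrangement inside $H_0 \cong \bbR^{n-1}$. A straightforward M\"obius-function computation on $L(\caA)$, partitioning the atoms according to whether they are contained in $H_0$, yields the basic recursion
\[
\chi(\caA, t) = \chi(\caA', t) - \chi(\caA'', t).
\]
The base case is $\caA = \emptyset$: here $\chi(\emptyset, t) = t^n$, the rank is zero, the unique region $\bbR^n$ is vacuously relatively bounded (its intersection with $V = \{0\}$ is $\{0\}$), and both identities check directly.

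For the first statement, each region of $\caA'$ either avoids $H_0$ and persists as a region of $\caA$, or is crossed by $H_0$ and splits into exactly two regions of $\caA$. The crossed regions correspond bijectively to the regions of $\caA''$ via the trace map $R \mapsto R \cap H_0$. Writing $N(\cdot)$ for the number of regions, this yields $N(\caA) = N(\caA') + N(\caA'')$. Combined with the $\chi$-recursion and the inductive hypotheses applied to $\caA'$ in $\bbR^n$ and $\caA''$ in $\bbR^{n-1}$, one concludes that $N(\caA) = (-1)^n \chi(\caA', -1) + (-1)^{n-1} \chi(\caA'', -1) = (-1)^n \chi(\caA, -1)$.

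For the second statement, let $b(\cdot)$ count the relatively bounded regions. One splits into two cases according to whether the normal of $H_0$ lies in the span $V'$ of the normals of $\caA'$. If it does, then $r(\caA') = r(\caA)$ and $r(\caA'') = r(\caA) - 1$, and a relatively bounded region of $\caA'$ crossed by $H_0$ produces two relatively bounded regions of $\caA$, giving $b(\caA) = b(\caA') + b(\caA'')$. If it does not, then $r(\caA') = r(\caA) - 1$, every region of $\caA'$ is unbounded in the direction normal to $H_0$ (so $b(\caA') = 0$ relative to $V$), and one verifies directly that $b(\caA) = b(\caA'')$. In either case the $\chi$-recursion closes the induction, and a sign bookkeeping with the two exponents $r(\caA)$, $r(\caA)-1$ delivers $b(\caA) = (-1)^{r(\caA)} \chi(\caA, 1)$. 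The main obstacle is precisely this rank-tracking in the bounded-regions step: because ``relatively bounded'' depends on the ambient span $V$, one has to monitor carefully how deletion of $H_0$ alters that span and which regions of $\caA'$ remain relatively bounded after the alteration.
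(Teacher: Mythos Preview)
The paper does not prove this theorem at all; it is quoted from Zaslavsky's memoir \cite{Za} and used as a black box. So there is no ``paper's own proof'' to compare your argument against, and I will simply comment on correctness.

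Your treatment of the first bullet (number of regions) via deletion--restriction is the standard argument and is fine. The second bullet, however, has a genuine gap in your Case~2. You assert that when the normal of $H_0$ lies outside $V'$ one has $b(\caA)=b(\caA'')$, justifying this by the remark that ``$b(\caA')=0$ relative to $V$''. But the inductive hypothesis gives you $b(\caA')$ \emph{relative to $V'$}, not relative to $V$, and that number need not vanish. Concretely, take $n=2$, $\caA'=\{x=0,\,x=1\}$, $H_0=\{y=0\}$: here $r(\caA')=1=r(\caA)-1$, so we are in Case~2; one computes $b(\caA)=0$ (all six regions of $\caA$ are unbounded strips or quadrants) while $b(\caA'')=1$ (the interval $(0,1)$ on the $x$-axis), so $b(\caA)\neq b(\caA'')$.

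What the $\chi$-recursion actually forces in Case~2, once you use $r(\caA')=r(\caA'')=r(\caA)-1$, is
\[
b(\caA)=b(\caA'')-b(\caA'),
\]
with $b(\caA')$ taken relative to $V'$. Checking this identity geometrically is exactly the delicate step: one uses that in Case~2 every region of $\caA'$ is crossed by $H_0$, giving a bijection between regions of $\caA'$ and regions of $\caA''$, and then compares, region by region, how many of the two halves $R_\pm$ are bounded in $V$ versus whether $R'$ is bounded in $V'$ and $R''$ is bounded in $V''$. Your Case~1 claim $b(\caA)=b(\caA')+b(\caA'')$ is correct but also deserves a line of justification along the same lines (the key fact being that for a crossed region $R'$, the number of bounded halves equals $[R'\text{ bounded}]+[R''\text{ bounded}]$). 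Alternatively, one can sidestep all of this by first reducing to the essential case via $\chi(\caA,t)=t^{\,n-r(\caA)}\chi(\caA_{\mathrm{ess}},t)$.
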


Based on Shi's determination of the number of regions of $\texttt{Shi}$ \cite{S1, S2}, Headley
obtained the characteristic polynomial of  $\texttt{Shi}$.
\begin{thm}\label{thm-HeadAt} \emph{(Theorem 2.4 of \cite{Hea})} Let $\frg$ be a finite-dimensional complex simple Lie algebra with rank $l$ and Coxeter number $h$. Let $V$ be the real vector space spanned by $\Delta(\frg, \frh)$. Then for the Shi arrangement in $V$, we have
$$
P(\emph{\texttt{Shi}}, t)=(1+ht)^l.
$$
\end{thm}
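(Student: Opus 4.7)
The plan is to prove the equivalent identity $\chi(\texttt{Shi}, t) = (t - h)^l$; the conversion $P(\caA, t) = (-t)^l \chi(\caA, -1/t)$ recorded at the start of Section 3.1 then yields $P(\texttt{Shi}, t) = (-t)^l(-1/t - h)^l = (1 + ht)^l$ by direct substitution.

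The tool of choice is the Crapo--Rota finite field method \cite{CR}. Every hyperplane of $\texttt{Shi}$ is defined by a linear form with integer coefficients once $V$ is identified with $\mathbb{R}^l$ via an integral basis of the coroot lattice $\check{Q}$; hence for each sufficiently large prime $p$ one has
\[
\chi(\texttt{Shi}, p) = \bigl|\bigl\{ v \in \mathbb{F}_p^l \,:\, (v, \alpha) \not\equiv 0, 1 \pmod p \text{ for every } \alpha \in \Delta^+ \bigr\}\bigr|.
\]
Because $\chi(\texttt{Shi}, t)$ has degree $l$, agreement with $(t - h)^l$ at infinitely many primes forces equality as polynomials. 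The task therefore reduces to showing that the displayed right-hand side equals $(p - h)^l$.

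To count, I would exploit the action of the affine Weyl group $\widehat W = W \ltimes \check{Q}$ on $\mathbb{R}^l$. Reduction modulo $p$ trivializes the translation subgroup $p\check Q$, and for $p > h$ the dilation by $p$ of the fundamental alcove is a fundamental domain for $W \ltimes p\check Q$ acting on the complement of $\texttt{Cox}$ in $\mathbb{F}_p^l$. A lattice point inside this dilated alcove satisfies the Shi conditions precisely when the alcove of $\widehat W$ containing it lies on the prescribed side of each distinguished wall $H_{\alpha, 0}$ and $H_{\alpha, 1}$, $\alpha \in \Delta^+$. Matching these alcoves with the dominant regions of $\texttt{Shi}$ --- which by Theorem 1 are in bijection with the ad-nilpotent ideals of $\frb$ --- and summing the local lattice point counts across all such regions produces the global total.

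The main obstacle will be executing the lattice-point count inside the dilated fundamental alcove and showing that the resulting sum telescopes to the clean factorization $(p - h)^l$. A priori the sum invites inclusion-exclusion over antichains of $\Delta^+$, in the spirit of Lemma \ref{lemma-sum-quasi-antichain}; the point is that this simplifies uniformly across all Lie types, which reflects the hidden product structure of the affine Weyl arrangement and ultimately the Shephard--Todd--Chevalley phenomenon underlying the factorization $\prod (t - e_i)$ of classical characteristic polynomials. Once $\chi(\texttt{Shi}, p) = (p - h)^l$ is verified for infinitely many primes $p$, the stated identity $P(\texttt{Shi}, t) = (1 + ht)^l$ follows.
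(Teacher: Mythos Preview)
The paper does not supply its own proof of this theorem: it is quoted as a known background result, attributed to Headley \cite{Hea} (whose argument, as the paper notes just before the statement, rests on Shi's determination of the number of regions of \texttt{Shi}). Immediately after stating the theorem the paper also points out that Athanasiadis \cite{At1} gave an independent proof via the Crapo--Rota finite field method. So there is no in-paper argument to compare your proposal against; the theorem functions purely as a citation. (The paper does recover the special cases $\frg = A_{n-1}$ and $\frg = C_n$ as corollaries of its own Theorems \ref{thm-AR} and \ref{thm-ShiG-type-C} together with the identities (\ref{Gamman-identity}) and (\ref{Lambdan-identity}); see Example \ref{exam-shi-cox-C}. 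But the general statement for arbitrary $\frg$ is not proved here.)

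Your proposal is essentially a blueprint for the Athanasiadis-style proof rather than a proof. The reduction to counting points of $\mathbb{F}_p^l$ avoiding the reduced hyperplanes is fine, and the affine Weyl group / dilated alcove viewpoint is the right geometric picture. But the crux --- actually evaluating that count and showing it equals $(p-h)^l$ uniformly in the Lie type --- is precisely the step you flag as ``the main obstacle'' and then leave undone. Invoking Theorem~1 to match dominant Shi regions with ad-nilpotent ideals does not by itself produce the number $(p-h)^l$; nor does an appeal to Lemma~\ref{lemma-sum-quasi-antichain}, which only rewrites the count as an alternating sum without evaluating it. In Athanasiadis's argument the clean factorization comes from a specific combinatorial bijection (placing labeled objects on a cycle of length $p$), and in Headley's it comes from Shi's region count plus an inductive/deletion--restriction argument; you have not supplied either mechanism. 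As written, the proposal identifies a correct strategy but stops short of the one genuinely nontrivial computation.
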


Recall that $h$ is the Coxeter number, which equals to $n+1, 2n, 2n$ and $2(n-1)$ for $A_{n}$, $B_n$, $C_n$ and $D_n$ respectively, see Section 3.18 of \cite{Hum}. Let us denote the multiset of the roots of $\chi(\caA, t)$ by $\exp(\caA)$.  For example, $\exp(\texttt{Shi})=\{h^l \}$ by the above theorem. Here and in what follows we write $\{ a_1^{m_1}, a_2^{m_2}, \cdots, a_r^{m_r}\}$ for a multiset, where each $m_i$ stands for the multiplicity of $a_i$. The Factorization Theorem of Terao \cite{Te} states that the characteristic polynomial $\chi(\caA, t)$ factors over the nonnegative integers for any free arrangement $\caA$.

\subsection{The finite field method}
Athanasiadis \cite{At1} offered a different approach to Theorem \ref{thm-HeadAt} which did not rely on Shi's result. There the main tool was the \emph{finite field method} of Crapo and Rota \cite{CR} which turned out to be very useful. Let us describe this method. Suppose that the defining equations for the hyperplanes in $\caA$ have coefficients in $\bbZ$. Let $p\in\bbZ$ be a prime number and consider a hyperplane $H\in\caA$ with defining equation $a_1x_1+\cdots+a_nx_n=b$, where $a_i, b\in\bbZ$. Then we define the following subset $H_p$ of the finite vector space $\mathbb{F}_p^n$ by reducing the coefficients of $H$ modulo $p$:
$$
H_p:=\{(x_1, \cdots, x_n)\in\mathbb{F}_p^n \mid a_1x_1+\cdots+a_nx_n=b\}.
$$
We note that when $p$ is large enough, each $H_p$ is a hyperplane in $\mathbb{F}_p^n$, and we call $\caA_p:=\{H_p\mid H\in\caA\}$ the \emph{reduced hyperplane arrangement} of $\caA$.

\begin{thm}\label{thm-finite-field} \emph{(\cite{CR})} Let $p\in \bbZ$ be a large prime, and let $\caA$ be a collection of hyperplanes in $\bbR^n$ whose hyperplanes have defining equations with coefficients in $\bbZ$. Then the characteristic polynomial of $\caA$ satisfies
$$
\chi(\caA, p)=\# \Big(\mathbb{F}_{p}^{n}-\bigcup_{H\in\caA} H_p\Big).
$$
That is, $\chi(\caA, p)$ counts the number of points in the complement of the reduced arrangement $\caA_p$ in the finite vector space $\mathbb{F}_p^n$.
\end{thm}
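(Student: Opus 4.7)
The plan is to prove the identity by M\"obius inversion on the intersection poset $L(\caA)$, after first verifying that for $p$ sufficiently large the reduction modulo $p$ faithfully preserves the combinatorial structure of the arrangement. This is the standard template for Crapo--Rota style results, and essentially all the real content is packaged into one stability statement; the rest is formal.

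First I would establish the following: for $p$ larger than the absolute value of every nonzero minor appearing in the coefficient matrices of subsystems of defining equations of hyperplanes in $\caA$, the map
$$
x \ \longmapsto\ x_p := \bigcap_{H \in \caA,\ x \subseteq H} H_p
$$
is a poset isomorphism $L(\caA)\to L(\caA_p)$ that preserves dimensions; in particular $\# x_p = p^{\dim x}$ for every $x\in L(\caA)$. This is pure linear algebra: the rank of an integer matrix equals its rank mod $p$ as soon as $p$ does not divide any of the (finitely many) maximal nonzero minors, whence no dimension drops and no two distinct flats of $\caA$ collapse modulo $p$. Consequently, for each $v\in \mathbb{F}_p^n$, the intersection
$$
X_v := \bigcap_{H \in \caA,\ v \in H_p} H_p
$$
is nonempty and equals $x_p$ for a unique $x\in L(\caA)$, and $\mathbb{F}_p^n$ decomposes as $\bigsqcup_{x \in L(\caA)} \{v : X_v = x_p\}$.

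Now set $f(x) = \#\{v \in \mathbb{F}_p^n : X_v = x_p\}$. For each $x\in L(\caA)$, a point $v\in x_p$ satisfies $X_v = y_p$ for some $y\in L(\caA)$ with $y_p\subseteq x_p$, equivalently $y\geq x$ in the reverse-inclusion order on $L(\caA)$. Hence
$$
p^{\dim x}\ =\ \#x_p\ =\ \sum_{y \geq x} f(y),
$$
and M\"obius inversion on $L(\caA)$ gives $f(x) = \sum_{y\geq x}\mu(x,y)\, p^{\dim y}$. A point $v$ lies outside $\bigcup_{H} H_p$ precisely when $X_v=V$, where $V$ is the minimum of $L(\caA)$; evaluating the formula at $x=V$ yields
$$
\#\Big(\mathbb{F}_p^n-\bigcup_{H\in \caA} H_p\Big) \ =\ f(V)\ =\ \sum_{y\in L(\caA)} \mu(V,y)\, p^{\dim y}\ =\ \chi(\caA, p),
$$
by the definition (\ref{def-char-poly}) of the characteristic polynomial.

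The only real obstacle is the stability step: one must isolate the precise meaning of "$p$ large" and check that under it no incidences are destroyed or created, and no dimensions drop when the defining equations are read modulo $p$. Once that is in hand, the rest is a single application of M\"obius inversion together with the observation that the points of $\mathbb{F}_p^n$ missing every hyperplane of $\caA_p$ are exactly those assigned to the bottom element $V\in L(\caA)$ by the stratification $v\mapsto X_v$.
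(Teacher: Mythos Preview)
Your argument is correct and is precisely the standard M\"obius-inversion proof of the Crapo--Rota finite field theorem. Note, however, that the paper does not supply its own proof of this statement: it is quoted from \cite{CR}, and the paper merely remarks that ``a proof of the above theorem may also be found on pages 199--200 of \cite{At1}.'' The argument you give is essentially the one found in those references, so there is nothing to contrast.
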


A proof of the above theorem may also be found on pages 199--200 of \cite{At1}.

\subsection{Expressing $\chi(\texttt{Shi}(G), p)$ as an alternating sum over the quasi-antichains}
When $\frg$ is a finite-dimensional simple Lie algebra over $\bbC$ with rank $l$, the arrangements $\texttt{Shi}$ and $\texttt{Cox}$, thus any arrangement between them, can be realized as arrangements whose hyperplanes  have defining equations in $\bbZ$. Let $G$ be any subset of $\Delta^{+}$.
Recall the arrangement $\texttt{Shi}(G)$ defined in  (\ref{Shi-G}).
We shall prepare a lemma for the computation of its characteristic polynomial.

\begin{defi}\label{def-quasi-antichain}
We call a subset $S\subseteq \Delta^{+}$ a \emph{quasi-antichain} if for any two distinct elements $\alpha, \beta$ in S, the difference $\alpha - \beta$ is not a nonzero integer multiple of any root. We call the cardinality of a quasi-antichain its \emph{size}.
\end{defi}

Of course, any antichain of $(\Delta^{+}, \leq)$ is automatically a quasi-antichain of $\Delta^{+}$. The notion of quasi-antichain will be illustrated vividly in the following sections. It is motivated by the following

\begin{lemma}\label{lemma-sum-quasi-antichain}
Let $p$ be a large prime.
We have
$$
\chi(\emph{\texttt{Shi}}(G), p)=\sum_{\scriptsize S \subseteq G
\mbox{ is a quasi}\atop \mbox{-antichain of } \Delta^{+} }
(-1)^{|S|} f(S),
$$
where  $f(S)$ is the number of vectors  in $\mathbb{F}_p^l -\emph{\texttt{Cox}}_p$  satisfying $(H_{\alpha, 1})_p$ for all $\alpha\in S$.
\end{lemma}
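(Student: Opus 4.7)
The plan is to combine the finite field method of Theorem \ref{thm-finite-field} with a standard inclusion–exclusion, and then show that all non–quasi-antichain terms vanish because their defining conditions force the vector into $\texttt{Cox}_p$.

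First, I would apply Theorem \ref{thm-finite-field} to $\texttt{Shi}(G)$ and split the hyperplanes into the Coxeter part and the ``level one'' part:
\[
\chi(\texttt{Shi}(G), p)=\#\Big(\mathbb{F}_p^l - \texttt{Cox}_p - \bigcup_{\alpha\in G}(H_{\alpha,1})_p\Big).
\]
Setting $X=\mathbb{F}_p^l - \texttt{Cox}_p$ and $A_\alpha=X\cap (H_{\alpha,1})_p$ for $\alpha\in G$, inclusion--exclusion on the finite collection $\{A_\alpha\}_{\alpha\in G}$ yields
\[
\chi(\texttt{Shi}(G), p)=\sum_{S\subseteq G}(-1)^{|S|}f(S),
\]
where $f(S)$ is exactly the quantity defined in the lemma: the number of $v\in \mathbb{F}_p^l-\texttt{Cox}_p$ satisfying $(v,\alpha)\equiv 1 \pmod p$ for every $\alpha\in S$.

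The key step is then to show $f(S)=0$ whenever $S$ fails to be a quasi-antichain of $\Delta^{+}$. Suppose $S\subseteq G$ is not a quasi-antichain, so there exist distinct $\alpha,\beta\in S$, a root $\gamma\in\Delta$, and a nonzero integer $k$ with $\alpha-\beta=k\gamma$. Any $v$ contributing to $f(S)$ must satisfy $(v,\alpha)\equiv(v,\beta)\equiv 1 \pmod p$, hence
\[
k(v,\gamma)=(v,\alpha-\beta)\equiv 0 \pmod p.
\]
Since the possible values of $k$ arising from differences of roots form a finite set of nonzero integers (depending only on the root system of $\frg$), for $p$ larger than all such $|k|$ the scalar $k$ is invertible in $\mathbb{F}_p$, so $(v,\gamma)\equiv 0 \pmod p$. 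But $\pm\gamma\in\Delta^{+}$ defines a hyperplane in $\texttt{Cox}$, so $v\in\texttt{Cox}_p$, contradicting $v\in X$. Therefore $f(S)=0$, and the sum collapses to the quasi-antichain terms, giving
\[
\chi(\texttt{Shi}(G), p)=\sum_{\scriptsize S\subseteq G\text{ is a quasi-}\atop \text{antichain of }\Delta^{+}}(-1)^{|S|}f(S).
\]

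There is no real obstacle here; the only point requiring mild care is the quantification ``$p$ large enough,'' which must be taken large enough both to apply Theorem \ref{thm-finite-field} and to invert the finitely many integers $k$ occurring as scalar multiples relating distinct roots. Since the lemma only asserts the identity for large primes, this causes no difficulty.
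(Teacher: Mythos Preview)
Your proof is correct and follows essentially the same approach as the paper: apply the finite field method, use inclusion--exclusion over subsets $S\subseteq G$, and eliminate the non--quasi-antichain terms by observing that the relation $\alpha-\beta=k\gamma$ forces $(v,\gamma)\equiv 0\pmod p$ for large $p$, placing $v$ on $\texttt{Cox}_p$. Your remark that only finitely many integers $k$ occur (so that ``$p$ large'' can be quantified uniformly) is a welcome clarification, but otherwise the argument matches the paper's.
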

\begin{proof}
By Theorem \ref{thm-finite-field}, it suffices to count the number of vectors in $\mathbb{F}_p^l - \texttt{Shi}(G)_p$.
By the principle of inclusion-exclusion (see for example Chapter 2 of \cite{St}), we have
$$
\chi(\texttt{Shi}(G), p)=\sum_{\scriptsize S \subseteq G} (-1)^{|S|}
f(S).
$$
The lemma follows from the observation that $f(S)=0$ if $S\subseteq G$ is not a quasi-antichain of $\Delta^+$. Indeed, in such a case, we can find two distinct roots $\alpha$ and $\beta$ in $S$ such that $\alpha-\beta=k\gamma$, where $k$ is a positive integer and $\gamma\in\Delta^{+}$. Now if there exists $v$ in  $\mathbb{F}_p^l -\texttt{Cox}_p$ satisfying the equations
$(H_{\alpha, 1})_p$ and $(H_{\beta, 1})_p$,
it would satisfy $(H_{k\gamma, 0})_p$ as well. Since $p$ is large enough, it would be a solution of
$(H_{\gamma, 0})_p$,
contradicting to the assumption that $v\notin\texttt{Cox}_p$.
\end{proof}

\begin{rmk}\label{rmk-quasi-antichain}
As we shall see in Corollaries \ref{Cor-type-A}, \ref{Cor-type-C} and \ref{Cor-type-D}, when $\frg$ is classical and $p$ is large enough,  $f(S)$ is nonzero if and only if $S$ is a quasi-antichain of $\Delta^+$.
\end{rmk}

\begin{lemma}\label{lemma-quasi-antichain-linear-independent}
Any quasi-antichain  $S=\{\beta_1, \cdots, \beta_k\}\subseteq \Delta^+$ is linearly independent. In particular,   $0\leq k\leq l$.
\end{lemma}
\begin{proof}
Suppose that $k>0$. Observe that $(\beta_i, \beta_j)\leq 0$ for any $i\neq j$. Indeed, otherwise $\beta_i-\beta_j$ would be a root, contradicting to the assumption that $S$ is a quasi-antichain. Now the same argument of page 9 of \cite{Hum} verifies the linear independence of $S$.
\end{proof}

\subsection{Quasi-antichains  and elements of $\caL(\texttt{Cox})$}

\begin{defi}\label{def-sub-closed}
We say that a subset $S\subseteq \Delta^{+}$ is \emph{sub-closed} if  $\alpha - \beta=k\gamma$, where $\alpha, \beta\in S$, $k\in\bbN^+$ and $\gamma\in\Delta^+$,  implies that $\gamma\in S$. Here ``sub" stands for substraction.
\end{defi}

It is obvious that any quasi-antichain of $\Delta^+$ is sub-closed.
For any
subset $S\subseteq \Delta^{+}$, we put $H_S:=\bigcap_{\alpha\in S}H_{\alpha, 0}$, and make the convention that $H_{\emptyset}=V$.
Moreover, we denote the intersection of all the sub-closed subsets of  $\Delta^{+}$ containing $S$ by $(S)$. One sees easily that $(S)$ is the smallest
sub-closed subset of  $\Delta^{+}$ containing $S$.

\begin{lemma}\label{lemma-S-gen-S}
For any
subset $S\subseteq \Delta^{+}$, we have $H_{(S)}=H_S$.
\end{lemma}
\begin{proof}
The desired conclusion follows from the observation that if  $\alpha - \beta=k\gamma$, where $\alpha, \beta, \gamma\in\Delta^+$, and $k\in\bbN^+$, then $$
H_{\alpha, 0}\cap H_{\beta, 0}\cap H_{\gamma, 0}=H_{\alpha, 0}\cap H_{\beta, 0}.$$
\end{proof}

\begin{lemma}\label{lemma-quasi-antichain-Cox}
For any sub-closed subset $S\subseteq \Delta^+$, there exists a unique quasi-antichain $S^{\prime}\subseteq S$ such that $H_S=H_{S^{\prime}}$.
\end{lemma}
\begin{proof}
Existence: let us do induction on $|S|$. When $|S|= 0$ or $1$, there is nothing to prove. Suppose that $S^{\prime}$ exists when $|S|\leq k$. Now let us consider the case that $|S|= k+1$ and $S$ is \emph{not} a quasi-antichain. By Definition \ref{def-quasi-antichain},
 $$
 A:=\{\alpha\in S|  \exists \beta\in S \text{ s.t. }
 \alpha-\beta=k\gamma
 \text{ for some } \gamma\in\Delta^+ \text{ and } k\in\bbN^+
  \}
 $$
 is non-empty. Pick up a maximal element $\alpha_0$ of $(A, \leq)$. Then there exists $\beta_0\in S$ such that
 $\alpha_0-\beta_0=k_0\gamma_0$,
for some $\gamma_0\in\Delta^+$  and $k_0\in\bbN^+$. Note that $\gamma_0\in S$ since $S$ is sub-closed. We claim that $S_1:=S \setminus \{\alpha_0\}$ is still sub-closed. Indeed, for any $\alpha, \beta\in S_1$ such that
$\alpha - \beta=k\gamma$, where  $\gamma\in\Delta^+$ and  $k\in\bbN^+$, we have $\gamma\in S$ since $S$ is sub-closed. Notice that $\gamma\neq \alpha_0$. Otherwise, we would have $\alpha\in A$, $\alpha_0\leq \alpha$ and $\alpha\neq \alpha_0$, contradicting to our choice of $\alpha_0$. Thus $\gamma\in S_1$ and the claim holds. Since $|S_1|=k$, by induction hypothesis, there exists a quasi-antichain $S^{\prime}\subseteq S_1$ such that $H_{S^{\prime}}=H_{S_1}$. Since
$$
H_{\alpha_0, 0}\cap H_{\beta_0, 0}\cap H_{\gamma_0, 0}=H_{\beta_0, 0}\cap H_{\gamma_0, 0},
$$
we conclude that $H_S=H_{S_1}=H_{S^{\prime}}$.

Uniqueness: reviewing the previous paragraph, we see that
every element of $S\setminus S^{\prime}$ can be expressed as a linear combination with  coefficients in $\bbN^+$  of two or more elements of $S^{\prime}$. Moreover, $S^{\prime}$ is linearly independent by Lemma \ref{lemma-quasi-antichain-linear-independent}.
Thus $S^{\prime}$ can be \emph{characterized} as the set of all roots $\alpha\in S$ such that $\alpha$ is not expressible as a linear combination with  coefficients in $\bbN^+$ of two or more elements of $S$.  The latter set is uniquely determined by $S$. Thus $S^{\prime}$ must  be unique as well.
\end{proof}

\begin{lemma}\label{lemma-quasi-antichain-Cox-unique}
If $S_1, S_2\subseteq \Delta^{+}$ are two quasi-antichains such that $H_{S_1}=H_{S_2}$, then $S_1=S_2$.
\end{lemma}
\begin{proof}
On one hand, $S_1$ and  $S_2$ are  two quasi-antichains contained in $(S_1\cup S_2)$.
On the other hand, by Lemma \ref{lemma-S-gen-S},
$$
H_{(S_1\cup S_2)}=H_{S_1\cup S_2}=H_{S_1}\cap H_{S_2}=H_{S_1}=H_{S_2}.
$$
Thus $S_1=S_2$ by  Lemma \ref{lemma-quasi-antichain-Cox}.
\end{proof}

\begin{thm}\label{thm-quasi-antichain-Cox}
The quasi-antichains of $\Delta^{+}$ are in bijection with the elements of $\caL(\emph{\texttt{Cox}})$, regardless of the type of $\frg$.
\end{thm}
\begin{proof}
This  is a combination of Lemmas \ref{lemma-S-gen-S},  \ref{lemma-quasi-antichain-Cox} and \ref{lemma-quasi-antichain-Cox-unique}.
\end{proof}

\section{Characteristic polynomial of \texttt{Shi}($I$): type $A$}

Let $\frg=A_{n-1}$, $n \geq 2$. We choose $\Delta^+=\{e_i-e_j|1\leq i<j\leq n\}$. We will refer to   $e_i-e_j$ simply by $ij$. They span the real vector space $V=\{ (x_1, \cdots, x_n)\in\bbR^n\mid x_1 + \cdots + x_n=0 \}$. The corresponding set of simple roots is
$\Pi=\{ 12, 23, \cdots, (n-1)n  \}$.
Recall from (\ref{H-alpha-k}) that  the arrangements \texttt{Cox} and \texttt{Shi} etc are defined within $V$.
This section is mainly devoted to investigating the characteristic polynomial and the freeness of \texttt{Shi}($I$).

\subsection{Set partitions and quasi-antichains}
Put $[n]:=\{1,2,\cdots, n\}$. We say that $\pi=\{B_1, B_2,\cdots, B_k\}$,  where each $B_i$ is a nonempty subset of $[n]$, is a \emph{partition of}  $[n]$ \emph{into $k$ blocks} if we have the disjoint union
$$
[n]=B_1 \sqcup B_2\sqcup\cdots \sqcup B_k.
$$
The \emph{arc diagram} of $\pi$ is drawn as follows: place the numbers $1, 2, \cdots, n$ on a line and draw an arc between each pair $i<j$ such that
\begin{itemize}
\item[$\bullet$] $i$ and $j$ are in the same block of $\pi$; and
\item[$\bullet$] there is no $i<l<j$ such that $i, l, j$ are in the same block of $\pi$.
\end{itemize}
A partition $\pi$ of $[n]$ is called \emph{nonnesting} if it does not contain arcs $ij$ and $kl$ such that $i<k<l<j$---that is, no arc of $\pi$ ``nests" inside another.
Figure 1 displays the arc diagrams for the partitions $\{\{1, 3\}, \{2\}, \{4, 5\}\}$ and $\{\{1, 5\}, \{2\}, \{3, 4\}\}$ of $[5]$. The first partition is nonnesting, while the second one is nesting since the arc $34$ lies inside the arc $15$.

Let $\Gamma_{n}$ be the staircase Young diagram.
Following Shi \cite{S3}, let us fill the positive roots of $A_{n-1}$ into the boxes of $\Gamma_{n-1}$ as follows: put the root $ij$ into the ($n+1-j$, $i$)-th box $B_{n+1-j, i}$. Figure 2 is an example with $n=5$.

We will \emph{identify} $\Delta^{+}$ with $\Gamma_{n-1}$ in this way, and transfer the partial ordering $\leq$ on $\Delta^{+}$ to a partial ordering on $\Gamma_{n-1}$ accordingly.
We note that for any two positive roots $\alpha$ and $\beta$,
$\alpha\leq\beta$ if and only if the  box of $\beta$ is to the north, or the west, or the northwest of the box of $\alpha$. This describes the poset structure of $\Gamma_{n-1}$. By a bit of abuse of notation, we will refer to this poset simply by $\Gamma_{n-1}$.
We can characterize the quasi-antichains of $\Gamma_{n-1}$ as follows: a subset $S$ of $\Gamma_{n-1}$ is a quasi-antichain if and only if
in each row and  column of $\Lambda_n$, there is at most one box of $S$. Now let us state an easy observation which should be well-known.

\begin{lemma}\label{lemma-quasi-antichain-partition-typeA}
There exists a bijective map from the set of all the quasi-antichains of $\Gamma_{n-1}$ to the set of all partitions of $[n]$. Moreover,  this map sends the quasi-antichains of $\Gamma_{n-1}$ with size $n-k$ to the partitions of $[n]$ with $k$ blocks.
\end{lemma}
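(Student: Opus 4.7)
The plan is to construct the bijection $\Psi$ explicitly via arc diagrams, sending a partition $\pi$ of $[n]$ to the set of positive roots $\alpha_{ij}$ such that $(i,j)$ is an arc of the arc diagram of $\pi$. First I would check the cardinalities agree: if $\pi$ has $k$ blocks of sizes $s_1,\dots,s_k$, then each block of size $s_t$ contributes $s_t-1$ arcs (one between each pair of elements consecutive in that block), so $|\Psi(\pi)|=\sum_t(s_t-1)=n-k$. Next I would verify that $\Psi(\pi)$ is a quasi-antichain of $\Gamma_{n-1}$: by the row/column characterization recalled just before the lemma, it is enough to check that no two arcs of $\pi$ share a left endpoint and no two arcs share a right endpoint, which is immediate because for each non-maximal element $i$ of a block there is a unique successor in that block, giving the unique arc with left endpoint $i$, and dually for right endpoints.

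For the inverse I would associate to any quasi-antichain $S\subseteq\Gamma_{n-1}$ the graph $G_S$ on the vertex set $[n]$ whose edges are the pairs $\{i,j\}$ with $\alpha_{ij}\in S$, oriented from the smaller to the larger endpoint. The quasi-antichain condition says precisely that every vertex of $G_S$ has in-degree at most $1$ and out-degree at most $1$, while the orientation by magnitude rules out any cycle, so $G_S$ is a disjoint union of paths and isolated points, with exactly $n-|S|=k$ connected components. A short structural argument then shows that on each nontrivial component $\{v_1<v_2<\cdots<v_s\}$ the edges of $S$ are exactly $(v_i,v_{i+1})$ for $1\leq i\leq s-1$: an interior vertex of the path must have both in-degree and out-degree equal to $1$, forcing it to lie strictly between its two path-neighbors in value, and iterating along the path forces its vertex sequence to be sorted with edges between consecutive values. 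Taking $\pi(S)$ to be the partition of $[n]$ into the connected components of $G_S$ therefore yields a partition with $k$ blocks whose arc diagram is $S$, and one checks that $\Psi$ and $S\mapsto\pi(S)$ are mutually inverse.

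The main obstacle is the structural claim on $G_S$, namely that the edges of $S$ restricted to a connected component are precisely the arcs between consecutive elements of that component; everything else reduces to bookkeeping. Once this is in place, restricting the bijection to quasi-antichains of length $n-k$ matches exactly the partitions of $[n]$ into $k$ blocks, as asserted.
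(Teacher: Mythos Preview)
Your proposal is correct and follows essentially the same route as the paper: both build the bijection through arc diagrams, matching a quasi-antichain $S$ with the partition whose arcs are exactly the pairs $(i,j)$ with $\alpha_{ij}\in S$. The paper's own proof is a two-sentence sketch that simply declares the map ``easily seen to be bijective''; you have filled in the details the paper omits, in particular the structural argument that on each connected component of $G_S$ the path order coincides with the natural order on $[n]$, so that the edges of $S$ are precisely the consecutive pairs and hence recover the arc diagram of $\pi(S)$.
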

\begin{proof}
Given any quasi-antichain $A$, let us draw an arc between the indices $i$ and $j$ for each element $ij\in A$. Then we end up with the arc graph of a partition $\pi(A)$. We map $A$ to $\pi(A)$, which is easily seen to be bijective.
Since a partition has $k$ blocks if and only if its arc diagram has $n-k$ arcs, the second statement follows directly.
\end{proof}
\begin{rmk}
 One sees easily that the map in the above lemma sends the set of all the antichains of $\Gamma_{n-1}$ to the set of all the nonnesting partitions of $[n]$.
\end{rmk}

\begin{figure}[H]
\centering
\scalebox{0.4}{\includegraphics{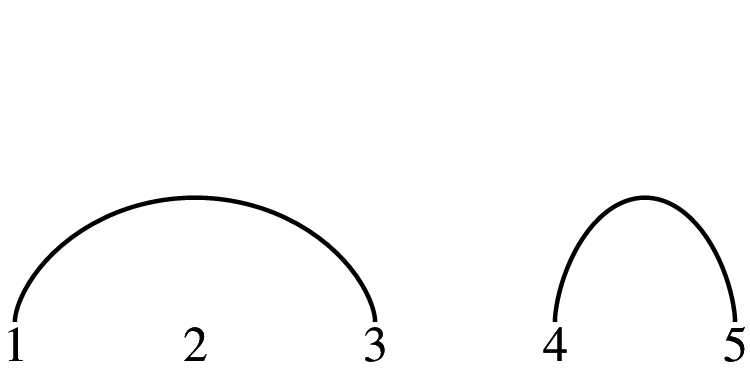}}
\qquad\quad
\scalebox{0.4}{\includegraphics{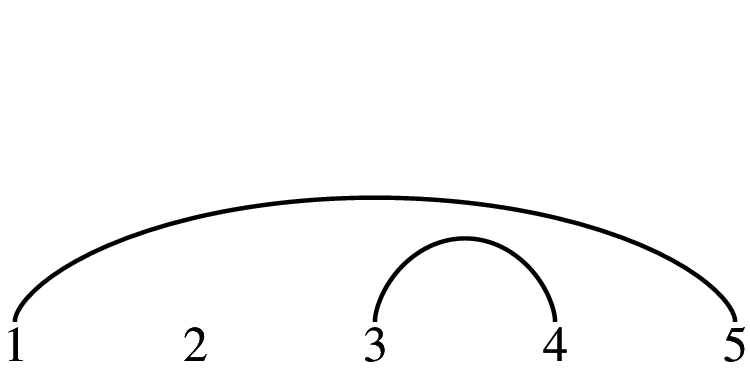}}
\caption{ Two partitions of $[5]$}
\end{figure}

\begin{figure}[H]
\centering
\scalebox{0.35}{\includegraphics{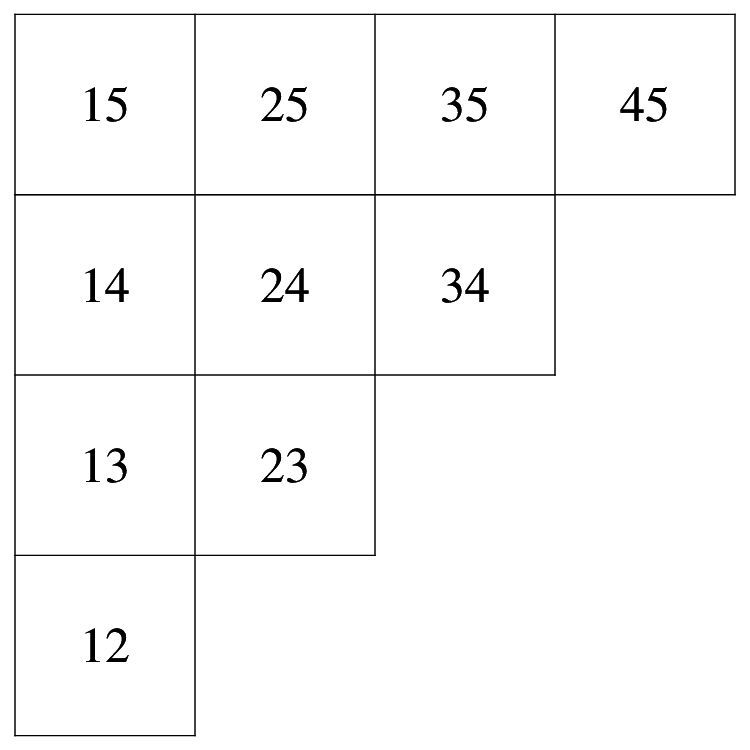}}
\caption{The Young diagram $\Gamma_4$ for $A_4$}
\end{figure}

\subsection{The characteristic polynomial and the freeness of \texttt{Shi}$(I)$}
For any subset $G \subseteq\Gamma_{n-1}$,
recall that
$$\texttt{Shi}(G)=\{x_i-x_j=0, 1\leq i<j\leq n\} \cup \{x_i-x_j=1, ij\in G \}.$$
Recall also that in \cite{AR} a partition $\pi$ of $[n]$ is called a \emph{$G$-partition} if all its arcs are contained in $G$, and the \emph{$G$-Stirling number} $\texttt{Stir}(G, k)$ is defined as the number of $G$-partitions with $k$ blocks.  Thus $\texttt{Stir}(\Gamma_{n-1}, k)=S(n, k)$. Moreover, in view of Lemma \ref{lemma-quasi-antichain-partition-typeA}, we have \begin{equation}\label{Stir-A}
\texttt{Stir}(G, k) = \# \Big\{ \mbox{quasi-antichains of }
\Gamma_{n-1} \mbox{ contained in } G \mbox{ with size } n-k\Big\}.
\end{equation}
This reinterpretation of $\texttt{Stir}(G, k)$ motivates our uniform definition (\ref{Stir-uniform}). Now let us state a result of Armstrong and Rhoades in \cite{AR}, which is deduced by the finite field method described in Theorem \ref{thm-finite-field}.

\begin{thm}\label{thm-AR} \emph{(Theorem 3.2 of \cite{AR})} For any subset $G \subseteq \Gamma_{n-1}$, the characteristic polynomial of $\emph{\texttt{Shi}}(G)$ is given by
$$
\chi(\emph{\texttt{Shi}}(G), t)=\sum_{k=0}^{n-1} (-1)^k \emph{\texttt{Stir}}(G, n-k) (t-k-1)_{n-1-k}.
$$
\end{thm}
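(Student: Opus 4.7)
The plan is to apply the finite field method via Lemma \ref{lemma-sum-quasi-antichain} and reduce the proof to a clean count of disjoint ``interval'' placements on a cycle. Fix a large prime $p$. The lemma gives
$$
\chi(\texttt{Shi}(G), p)=\sum_{\substack{S\subseteq G\\ S \text{ quasi-antichain}}} (-1)^{|S|} f(S).
$$
By Lemma \ref{lemma-quasi-antichain-partition-typeA} and \eqref{Stir-A}, the quasi-antichains $S\subseteq G$ with $|S|=k$ are in bijection with $G$-partitions of $[n]$ into $n-k$ blocks, and there are exactly $\texttt{Stir}(G,n-k)$ of them. Regrouping the alternating sum by $k=|S|$, the theorem reduces to the key claim: $f(S)=\prod_{j=k+1}^{n-1}(p-j)$ whenever $|S|=k$.

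To verify the claim, first observe that working in $\mathbb{F}_p^n$ and dividing by $p$ at the end (to account for translation along the diagonal, which preserves all defining equations of $\texttt{Shi}(G)$) is equivalent to working in $V_p$. Fix $S$ and let $\pi$ be the associated partition with blocks $B_1,\ldots,B_{n-k}$ of sizes $m_1,\ldots,m_{n-k}$. Within a block $B_s=\{i_1<i_2<\cdots<i_{m_s}\}$, the elements of $S$ lying in $B_s$ are precisely the consecutive arcs $(i_r,i_{r+1})$, and the constraints $v_{i_r}-v_{i_{r+1}}=1$ force $v_{i_r}=v_{i_1}-(r-1)$. Thus the value set $\mathcal{I}_s:=\{v_i : i\in B_s\}$ is a descending interval of length $m_s$ in $\mathbb{F}_p$ parameterized by a single base $a_s:=v_{i_1}$, and the Coxeter distinctness condition $v_i\neq v_j$ for $i\neq j$ becomes: the intervals $\mathcal{I}_1,\ldots,\mathcal{I}_{n-k}$ must be pairwise disjoint in $\mathbb{Z}/p\mathbb{Z}$. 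Hence $p\cdot f(S)$ equals the number of placements of $n-k$ labeled, non-overlapping intervals of prescribed lengths on the cycle of size $p$.

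The main obstacle is that, a priori, this cyclic placement count could depend on the multiset $(m_1,\ldots,m_{n-k})$, which would break the desired formula. A contraction argument shows it does not: place the interval of size $m_1$ in any of $p$ positions on the cycle; this leaves a linear arc of length $p-m_1$ on which the remaining $n-k-1$ labeled intervals (of total length $n-m_1$) must be arranged without overlap. The standard stars-and-bars count for labeled segments on a line yields
$$
(n-k-1)!\binom{(p-m_1)-(n-m_1)+(n-k-1)}{n-k-1}=(n-k-1)!\binom{p-k-1}{n-k-1}
$$
arrangements, which depends only on $n$ and $k$. Multiplying by $p$ and simplifying gives $p\cdot f(S)=p\prod_{j=k+1}^{n-1}(p-j)$, establishing the key claim. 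Assembling the pieces gives the theorem's identity at $t=p$ for all sufficiently large primes, hence as an identity of polynomials in $t$.
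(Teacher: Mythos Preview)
Your proposal is correct and follows essentially the same approach that the paper attributes to \cite{AR}: apply Lemma~\ref{lemma-sum-quasi-antichain} to reduce to an alternating sum over quasi-antichains (equivalently, arc sets of $G$-partitions), and then show that $f(S)$ depends only on $|S|$ via a cyclic placement count in $\mathbb{F}_p$. Your interval-contraction computation of $f(S)=\prod_{j=k+1}^{n-1}(p-j)$ is a clean variant of the Athanasiadis-style counting the paper uses for the other classical types; the divide-by-$p$ passage from $\mathbb{F}_p^n$ to $V_p$ is valid for large primes since $p\nmid n$.
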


\begin{cor}\label{Cor-type-A} Let $p$ be a large prime. For any subset $S\subseteq \Gamma_{n-1}$ with cardinality $0\leq k\leq n-1$, the following are equivalent:
\begin{itemize}
\item[(i)] $S$ is a quasi-antichain of $\Gamma_{n-1}$;
\item[(ii)] in each row and column of $\Gamma_{n-1}$, there is at most one box of $S$;
\item[(iii)] $f(S)$ is nonzero;
\item[(iv)] $f(S)=(p-k-1)!/(p-n)!$.
\end{itemize}
\end{cor}
\begin{proof}
It follows from \S 4.1 and the proof of Theorem \ref{thm-AR}.
\end{proof}

\begin{example}\label{exam-shi-cox-A} When $G$ is the empty set, by Theorem \ref{thm-AR}, we have
$$
\chi(\texttt{Cox}, p)=(p-1)_{n-1}.
$$
When $G=\Gamma_{n-1}$, by Theorem \ref{thm-AR} and $(a)_n=(-1)^n (n-a-1)_n$, we have
\begin{align*}
\chi(\texttt{Shi}, p)
&=\sum_{k=0}^{n-1}(-1)^k S(n, n-k)(p-k-1)_ {n-k-1}\\
&=\sum_{k=1}^{n}(-1)^{n-1}S(n, k) (n-p-1)_{k-1} \\
&=\frac{(-1)^{n-1}}{n-p}\sum_{k=0}^{n}S(n, k)(n-p)_{k}\\
&= (p-n)^{n-1},
\end{align*}
where  the penultimate equality holds since $S(n, 0)=0$ for $n>0$,  and the final equality uses (\ref{Gamman-identity}). \end{example}

\begin{thm}\label{thm-type-A}  Let $\frg$ be $A_{n-1}$ and let $I$ be any subset of $\Pi$ with cardinality $r\geq 1$. Then $\chi(\emph{\texttt{Shi}(I)}, t)$ is independent of the $r$ elements that $I$ contains, and
\begin{equation}\label{exp-type-A}
 \exp(\emph{\texttt{Shi}}(I))=\{(n-r)^{ n-r}, (n-r+1)^1, (n-r+2)^1, \cdots, (n-1)^1\}.
 \end{equation}
In particular, the number of  regions  is
$(n-r+1)^{ n-r-1} n!/(n-r)!$ and the number of bounded regions  is $(n-r-1)^{ n-r-1} (n-2)!/(n-r-2)!$.
Moreover, the cone of \emph{\texttt{Shi}(I)}  is free.
\end{thm}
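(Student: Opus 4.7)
The plan is to compute $\chi(\texttt{Shi}(I),t)$ via the Armstrong--Rhoades formula (Theorem~\ref{thm-AR}) and then pin down the answer by specializing to a canonical choice of $I$ where Athanasiadis's hypothesis in Theorem~\ref{thm-At} is satisfied. The first step is to describe $C_I$ explicitly. Writing $S\subseteq[n-1]$ for the index set such that $I=\{k(k+1):k\in S\}$ and $T=[n-1]\setminus S$, a short case analysis of when $(e_i-e_j)-(e_k-e_{k+1})\in\Delta$, combined with Lemma~\ref{lemma-CI}, yields
\[
C_I=\{ij\in\Delta^+:i\in T\text{ and }j-1\in T\}.
\]
In general $C_I$ fails the upward-closure hypothesis of Theorem~\ref{thm-At} (for instance, when $n=4$ and $I=\{23\}$ one has $12\in C_I$ but $13\notin C_I$), so establishing the independence of $\chi(\texttt{Shi}(I),t)$ on the specific choice of $I$ requires a separate combinatorial argument.

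The combinatorial heart of the proof is the identity $\texttt{Stir}(C_I,k)=S(n-r,k-r)$ with $S(\cdot,\cdot)$ the Stirling number of the second kind, showing the count depends only on $r=|I|$. I would prove it by exhibiting an explicit bijection between $C_I$-partitions of $[n]$ with $k$ blocks and arbitrary partitions of $[n]/{\sim}$ with $k-r$ blocks, where $\sim$ is the equivalence relation on $[n]$ generated by $s\sim s+1$ for all $s\in S$ (so that $|[n]/{\sim}|=n-r$). The observation powering the bijection is that in any $C_I$-partition $\pi$, every $s\in S$ must be the maximum of its block and every $s+1$ (for $s\in S$) must be the minimum of its necessarily distinct block, since an arc $ij\in C_I$ requires $i\in T$ and $j-1\in T$. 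The map sends $\pi$ to the partition $\bar\pi$ of $[n]/{\sim}$ whose blocks are the images of the connected components of the graph on $[n]$ whose edges come from within-$\pi$-block pairs together with the $S$-edges $\{s,s+1\}$ for $s\in S$; to invert, lift each block of $\bar\pi$ to $\tilde B\subseteq[n]$ and observe that the only $C_I$-arc between two consecutive $\sim$-classes $\{c,\ldots,c+k\}$ and $\{c',\ldots\}$ inside $\tilde B$ is the forced pair $\{c+k,c'\}$, which uniquely reconstructs the $\pi$-blocks. A forest count on $\tilde B$ yields $|S\cap\tilde B|+1$ blocks of $\pi$ per $\tilde B$, summing to $r+|\bar\pi|$ blocks overall.

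Combining the bijection with Theorem~\ref{thm-AR} shows $\chi(\texttt{Shi}(I),t)$ depends only on $n$ and $r$. To evaluate it, specialize to $I'=\{12,23,\ldots,r(r+1)\}$, for which $C_{I'}=\{ij:r+1\le i<j\le n\}$; this satisfies Athanasiadis's hypothesis trivially (any $ij\in C_{I'}$ has $i\ge r+1$, whence $ik\in C_{I'}$ for all $k>j$), and Theorem~\ref{thm-At} produces $a_j=\max(0,j-r-1)$ and hence $c_j=n-j+1$ for $2\le j\le r+1$ and $c_j=n-r$ for $r+2\le j\le n$, assembling to exactly the exponent multiset $\{(n-r)^{n-r},(n-r+1)^1,\ldots,(n-1)^1\}$. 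The region and bounded-region counts then follow by substituting $t=-1$ and $t=1$ into the explicit product form and applying Zaslavsky's Theorem~\ref{thm-Za}.

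The main obstacle is inductive freeness of $\texttt{Shi}(I)$ for arbitrary $I$, since Theorem~\ref{thm-At} (which in \cite{At2} also yields inductive freeness) applies only to $I'$. My plan is to build $\texttt{Shi}(I)$ inductively from $\texttt{Cox}$ (inductively free with exponents $\{1,2,\ldots,n-1\}$) by adding the hyperplanes $H_{\alpha,1}$ for $\alpha=t_a(t_b+1)\in C_I$ one at a time, in a total order compatible with the parametrization $1\le a\le b\le n-1-r$ (say, increasing $b$ then decreasing $a$), checking Terao's addition-deletion condition at each step. The technical crux will be identifying, at each stage, the restriction $\texttt{Shi}(I)^{H_{\alpha,1}}$ as a product of Shi-type arrangements of strictly smaller rank to which induction applies, and confirming that the exponent bookkeeping matches the exponent multiset already computed above.
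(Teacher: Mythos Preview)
Your treatment of the characteristic polynomial is essentially the paper's argument. The paper cites Righi's observation that $(C_I,\leq)\cong\Gamma_{n-1-r}$ as posets; in your parametrization this is just the map $(a,b)\mapsto t_a(t_b+1)$, which visibly respects rows and columns, so quasi-antichains of $\Gamma_{n-1}$ contained in $C_I$ correspond to quasi-antichains of $\Gamma_{n-1-r}$, giving $\texttt{Stir}(C_I,n-k)=S(n-r,n-r-k)$. Your explicit bijection on the level of set partitions is a faithful unpacking of that isomorphism. Both arguments then feed this into Theorem~\ref{thm-AR} and evaluate by specializing to $I_0=\{12,\ldots,r(r+1)\}$ and applying Theorem~\ref{thm-At}, exactly as you propose.

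For inductive freeness, however, you are working much harder than necessary, and your plan is left at the level of an outline whose ``technical crux'' is not resolved. The paper does not rebuild $\texttt{Shi}(I)$ from $\texttt{Cox}$ via Terao's addition--deletion; it simply invokes Theorem~4.1 of \cite{At2}, which guarantees that $\texttt{Shi}(G)$ is inductively free once the graph $G$ on $[n]$ avoids two configurations: (a) a triple $i<j<k$ with $ij,jk\in G$ but $ik\notin G$; and (b) four distinct vertices $i_1<j_1$, $i_2<j_2$ for which $i_1j_1$ and $i_2j_2$ are the \emph{only} $G$-edges among $\{i_1,j_1,i_2,j_2\}$. Both checks are immediate from your own description $C_I=\{ij:i\in T,\ j-1\in T\}$: for (a), $ij,jk\in C_I$ force $i\in T$ and $k-1\in T$, whence $ik\in C_I$; for (b), any two $C_I$-edges on four distinct vertices already pin down enough membership in $T$ to produce a third $C_I$-edge among them. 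Your addition--deletion scheme would, in effect, reprove Athanasiadis's Theorem~4.1 in this special case; citing it turns the freeness claim into a two-line verification.
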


\begin{proof} Fix any subset $I$ of $\Pi$ with cardinality $r\geq 1$.
As noticed by Righi in \S 5.1 of \cite{R1}, the poset $(C_I, \leq)$ is isomorphic to $\Gamma_{n-1-r}$. See Figure 3 for two examples. Moreover, by Corollary \ref{Cor-type-A}(ii),
 $\texttt{Stir}(C_I, n-k)$ equals to the number of quasi-antichains
of $\Gamma_{n-1-r}$ with size $k$. By Lemma \ref{lemma-quasi-antichain-partition-typeA}, the latter is $S(n-r, n-r-k)$, which is nonzero only if $0\leq k\leq n-r$.
Therefore by Theorem \ref{thm-AR}, we have
$$
\chi(\texttt{Shi}(I), t)=\sum_{k=0}^{n-r} (-1)^k S(n-r, n-r-k)(t-k-1)_{n-k-1}.
$$
This tells us that $\chi(\texttt{Shi}(I), t)$ depends only on $r$.
To deduce its explicit expression, the first way is quoting the
formula (\ref{Gamman-identity}), which gives
$$
\sum_{k=0}^{n-r} S(n-r, k)(t)_k=t^{n-r}.
$$
Then after some elementary calculations similar to those presented in Example \ref{exam-shi-cox-A}, one arrives at  (\ref{exp-type-A}).
Alternatively, since $\chi(\texttt{Shi}(I), t)$ is independent of the $r$ elements that $I$ contains,
 we can focus on the special case that $I_0=\{12, \cdots, r(r+1)\}$. Then
$C_{I_0}=\{ij\mid r+1\leq i<j\leq n \}$,
and (\ref{exp-type-A}) follows from Theorem 2.2 of \cite{At2}.

For the last statement, we use Theorem 4.1 of \cite{At2}. It suffices to rule out the following two possibilities:
\begin{itemize}
\item[(a)] there exists $1\leq i<j<k\leq n$ such that $ij, jk\in C_I$ but $ik\notin C_I$;
\item[(b)] there exists four distinct numbers $i_1<j_1, i_2<j_2\in [n]$ such that $i_1 j_1, i_2j_2$ gives \emph{all} the edges between the vertices  $\{i_1, j_1, i_2, j_2\}$ in $C_I$.
\end{itemize}
Suppose that (a) happens. Then by Lemma \ref{lemma-CI}, the (not necessarily distinct) simple roots $i(i+1), (j-1)j, j(j+1), (k-1)k$ are not contained in $I$. Then $ik\notin C_I$ means that there is a simple root $\alpha\in I$ such that $ik-\alpha$ is a root. Thus, $\alpha$ has to be $i(i+1)$ or $(k-1)k$, which is absurd. Now suppose that (b) happens. Without loss of generality, we assume that $i_1<i_2$. Then there are three cases: (1) $i_1<j_1<i_2<j_2$; (2) $i_1<i_2<j_1<j_2$; (3) $i_1<i_2<j_2<j_1$. Suppose that (3) happens. Then by Lemma \ref{lemma-CI}, the (not necessarily distinct) simple roots $i_1(i_1+1), (j_1-1)j_1, i_2(i_2+1), (j_2-1)j_2$ are not contained in $I$. Thus we would have $i_1 j_2\in C_I$, contradiction. (1) and (2) can be ruled out similarly.
\end{proof}

\begin{figure}[H]
\centering
\scalebox{0.35}{\includegraphics{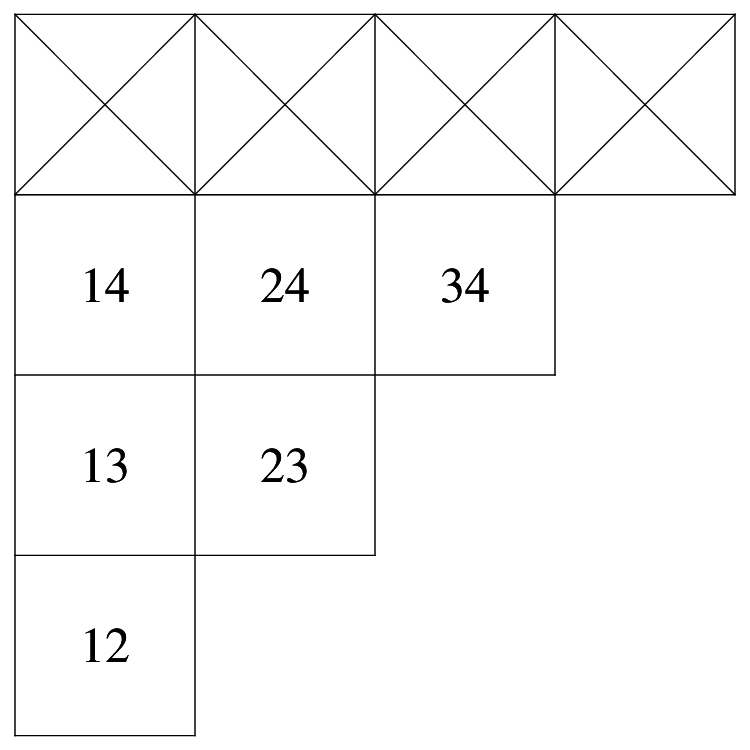}}
\qquad
\scalebox{0.35}{\includegraphics{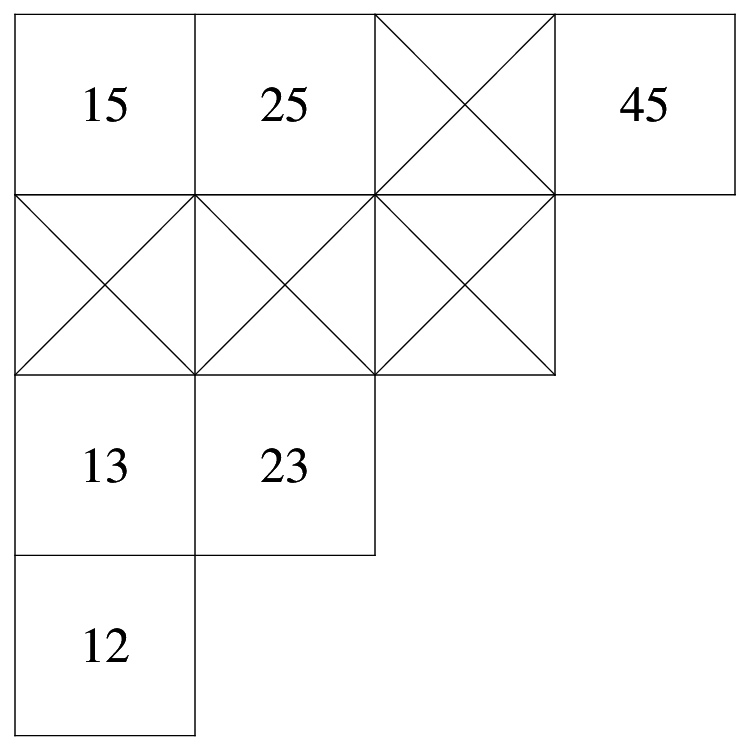}}
\caption{$\frg=A_4$, $I=\{45\}$, $I^{\prime}=\{34\}$}
\end{figure}

\section{Characteristic polynomial of \texttt{Shi}($I$): type $C$}
Let $\frg$ be of type $C_{n}$, $n \geq 2$. We choose  $\Delta^+=\{e_i-e_j\mid 1\leq i<j\leq n\}\cup \{2 e_i\mid 1\leq i\leq n \}$. Then  $\Pi=\{e_1-e_2, e_2-e_3, \cdots,  e_{n-1}-e_n, 2e_n\}$.  They span the real vector space $\mathbb{R}^n$. This section is devoted to proving the following

\begin{thm}\label{thm-type-C}  Let $\frg$ be $C_{n}$ and let $I$ be any subset of $\Pi$ with cardinality $1\leq r\leq n-1$.
Then when $I$ contains $2e_n$, $\chi(\emph{\texttt{Shi}}(I), t)$ is independent of the other $r-1$ simple roots that $I$ contains, and we have
\begin{equation}\label{exp-type-C1}
\exp(\emph{\texttt{Shi}(I)})=\{ (2n-2r+1)^{n-r+1}, (2n-2r+3)^1, (2n-2r+5)^1, \cdots,  (2n-1)^1\}.
\end{equation}
In particular, the number of  regions is
$(2n-2r+2)^{n-r} (2n)!!/(2n-2r)!!$ and
the number of bounded regions is
$(2n-2r)^{n-r}(2n-2)!!/(2n-2r-2)!!$.
Similarly, when $I$ does not contain $2e_n$, $\chi(\emph{\texttt{Shi}}(I), t)$ is independent of the $r$ simple roots that $I$ contains, and we have
\begin{equation}\label{exp-type-C2}
\exp(\emph{\texttt{Shi}(I)})=\{ (2n-2r)^{n-r}, (2n-2r+1)^1, (2n-2r+3)^1, \cdots,  (2n-1)^1\}.
\end{equation}
In particular, the number of  regions is
$(2n-2r+1)^{n-r} (2n)!!/(2n-2r)!!$ and
the number of bounded regions is
$(2n-2r-1)^{n-r} (2n-2)!!/(2n-2r-2)!!$.
\end{thm}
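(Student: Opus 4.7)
The plan is to apply Theorem 3 (with $G = C_I$), reducing $\chi(\texttt{Shi}(I), t)$ to an alternating sum over quasi-antichains contained in $C_I$, and then to evaluate that sum using the identities of Lemma \ref{lemma-Tn-Lambdan} (the type-$C$ analogs of the Stirling identity $\sum_k S(n,k)(t)_k = t^n$). First I would invoke Theorem 3 to write $\chi(\texttt{Shi}(I), t) = \sum_{k=0}^{n} (-1)^k q_k(I) \prod_{j=k}^{n-1}(t - 2j - 1)$, where $q_k(I) := \texttt{Stir}(C_I, n-k)$ counts quasi-antichains of length $k$ contained in $C_I$. To parametrize these, I would assign to each positive root of $C_n$ a signed tag --- $e_i - e_j \mapsto \{+i, -j\}$, $e_i + e_j \mapsto \{+i, +j\}$, and $2e_i \mapsto \{+i, +i\}$ --- and verify that $S \subseteq \Delta^+(C_n)$ is a quasi-antichain precisely when its tags form a matching on $\{\pm 1, \ldots, \pm n\}$ in which at most one edge is a ``loop'' (i.e., at most one long root belongs to $S$).

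Next I would compute $C_I$ via Lemma \ref{lemma-CI} for the canonical representatives $I_0^{\mathrm{short}} = \{e_{n-r}-e_{n-r+1}, \ldots, e_{n-1}-e_n\}$ (case $2e_n \notin I$) and $I_0^{\mathrm{long}} = \{e_{n-r+1}-e_{n-r+2}, \ldots, e_{n-1}-e_n, 2e_n\}$ (case $2e_n \in I$). The calculation should show, after a suitable relabeling of signed indices, that $C_{I_0^{\mathrm{short}}}$ is isomorphic as a signed graph to $\Delta^+(C_{n-r})$, while $C_{I_0^{\mathrm{long}}}$ equals $\Delta^+(C_{n-r}) \cup \{e_i - e_{n-r+1} : 1 \leq i \leq n-r\}$ and matches the signed graph of $C_{\{2e_m\}} \subset \Delta^+(C_m)$ for $m = n-r+1$. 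Since the quasi-antichain property depends only on the signed tags, these identifications give $q_k(I_0^{\mathrm{short}}) = T_{n-r}(k)$ and $q_k(I_0^{\mathrm{long}}) = \Lambda_{n-r+1}(k)$, the two Stirling-like sequences from Lemma \ref{lemma-Tn-Lambdan}. To handle arbitrary $I$, I would exhibit a length-preserving bijection between quasi-antichains of $C_I$ and of $C_{I_0}$ in the same case, most naturally via a relabeling of signed indices adapted to $I$.

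With $q_k(I)$ identified, the vanishing $q_k(I) = 0$ for $k > n-r$ (forced because each edge's positive vertex must lie in a set of only $n-r$ indices after relabeling) lets me truncate and factor the ``lone'' product $L_0 = \prod_{j=n-r}^{n-1}(t-2j-1)$ out of the sum (case $2e_n \notin I$), or the product $L' = \prod_{j=n-r+1}^{n-1}(t-2j-1)$ (case $2e_n \in I$). In the first case the remaining inner sum equals $(t-(2n-2r))^{n-r}$ by the $T$-identity of Lemma \ref{lemma-Tn-Lambdan}; in the second case it equals $(t-(2n-2r+1))^{n-r+1}$ by the $\Lambda$-identity (using $\Lambda_{n-r+1}(n-r+1) = 0$, so the sum truncates cleanly). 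Multiplying back recovers (\ref{exp-type-C1}) and (\ref{exp-type-C2}). Finally, since $r(\texttt{Shi}(I)) = n$, Zaslavsky's Theorem \ref{thm-Za} gives the number of regions as $(-1)^n \chi(\texttt{Shi}(I), -1)$ and the number of bounded regions as $(-1)^n \chi(\texttt{Shi}(I), 1)$, which reduce by a routine calculation to the asserted products of double factorials. The main obstacle is Step 2: different $I$'s with the same cardinality can have non-isomorphic $\Delta_I$ (e.g., $A_r$ versus $A_1^{\oplus r}$) and hence quite different $C_I$, so establishing the invariance $q_k(I) = q_k(I_0)$ requires a careful signed-matching bijection; matching $C_{I_0^{\mathrm{long}}}$ to the $\Lambda$-sequence rather than to a $T$-sequence is the second subtlety worth extra care.
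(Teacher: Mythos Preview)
Your overall strategy coincides with the paper's: apply Theorem~3 with $G=C_I$, identify the quasi-antichain counts $q_k(I)$ with one of the two Stirling-like sequences of Lemma~\ref{lemma-Tn-Lambdan}, and then collapse the alternating sum via the corresponding identity. There is, however, a swap in your assignment of identities. Your own signed-graph computation gives $C_{I_0^{\mathrm{short}}}\cong\Delta^{+}(C_{n-r})$, and $\Delta^{+}(C_{n-r})$ is exactly the paper's $\Lambda_{n-r}$; hence $q_k(I_0^{\mathrm{short}})=S(\Lambda_{n-r},\,n-r-k)$ and the identity needed is part~(i) of Lemma~\ref{lemma-Tn-Lambdan} (the $\Lambda$-identity~(\ref{Lambdan-identity})), not part~(ii). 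Dually, your set $C_{I_0^{\mathrm{long}}}=\Delta^{+}(C_{n-r})\cup\{e_i-e_{n-r+1}:1\le i\le n-r\}$ is precisely $T_{n-r}$ in the paper's sense ($\Lambda_{n-r+1}$ with its middle column deleted), so $q_k(I_0^{\mathrm{long}})=S(T_{n-r},\,n-r-k)$ and one must invoke part~(ii). Your stated inner sums $(t-(2n-2r))^{n-r}$ and $(t-(2n-2r+1))^{n-r+1}$ are correct, so the error is one of labelling rather than of substance; but as written, invoking ``$\Lambda_{n-r+1}(n-r+1)=0$'' together with the $\Lambda$-identity would not yield the second factorisation, whereas $S(T_{n-r},-1)=0$ together with~(\ref{Tn-identity}) does.

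For general $I$ of the given type, the paper does not construct a signed-matching bijection between $C_I$ and $C_{I_0}$. Instead it quotes Righi's result (\S5.2 of \cite{R1}) that $(C_I,\leq)$ is poset-isomorphic to $T_{n-r}$ (when $2e_n\in I$) or to $\Lambda_{n-r}$ (when $2e_n\notin I$) for \emph{every} such $I$, not just the canonical one. You are right that a bare poset isomorphism need not preserve the quasi-antichain property, which is defined through root differences; but Righi's map is an explicit box-to-box correspondence respecting rows, columns, and the diagonals $L_j$ of $\Lambda_n$, which by the characterisation in \S5.1 is exactly what governs quasi-antichains. So the ``main obstacle'' you flag is handled in the paper by citation rather than by the direct bijection you propose.
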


\subsection{Quasi-antichains: characterization and representation}
Following Shi \cite{S3}, let us fill the positive roots of $C_n$ into the staircase Young diagram $\Lambda_n$ as follows: for $1\leq i\leq n$, put $2e_i$ into the $(i, i)$-th box $B_{i, i}$; for $1\leq i <j\leq n$, put $e_i+e_j$ (resp. $e_i-e_j$) into the $(i, j)$-th box $B_{i, j}$ (resp. the $(i, 2n+1-j)$-th box $B_{i, 2n+1-j}$). The $C_4$ case is illustrated in Figure 4.

\begin{figure}[H]
\centering
\scalebox{0.55}{\includegraphics{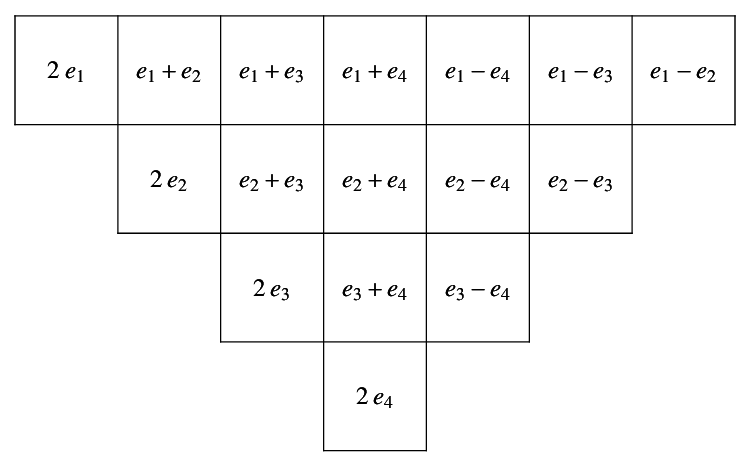}}
\caption{The staircase Young diagram $\Lambda_4$ for $C_4$}
\end{figure}

We will \emph{identify} $\Delta^{+}$ with $\Lambda_n$ in this way, and transfer the partial ordering $\leq$ on $\Delta^{+}$ to a partial ordering on $\Lambda_n$ accordingly.
We note that for any two positive roots $\alpha$ and $\beta$,
$\alpha\leq\beta$ if and only if the  box of $\beta$ is to the north, or the west, or the northwest of the box of $\alpha$. This describes the poset structure of $\Lambda_n$.  By a bit of abuse of notation, we will refer to this poset simply by $\Lambda_n$. For later reference, we denote by $T_{n-1}$ the subdiagram obtained from $\Lambda_n$ by deleting the $n$-th column. The diagram $T_{n-1}$ inherits a partial ordering from $\Lambda_n$.
 Let $L_1=\{2e_i\mid 1\leq i\leq n\}$, and define $L_j$, $2\leq j\leq n-1$, to be the collection of roots on the $j$-th row and the $j$-th column of $\Lambda_n$.
We can characterize the quasi-antichains of $\Lambda_n$ as follows: a subset $S$ of $\Lambda_n$  is a quasi-antichain if and only if
in each row and column of $\Lambda_n$, and in each $L_j$, $1\leq j\leq n-1$, there is at most one box of $S$.

Put $\pm [n]=\{\pm 1, \cdots, \pm n\}$. As in \cite{At1}, we call the elements of $\pm [n]$ the \emph{signed integers} from $1$ to $n$.
Let $S\subseteq\Delta^{+}$ be a quasi-antichain with cardinality $n-k$.
Let us represent $S$ in the following vivid way:
put $n$ boxes on a line and fill $-i, i$ in the $i$-th box for $1\leq i\leq n$. If $2e_i\in S$ (there is at most one such $i$), delete the $i$-th box. If $e_i - e_j\in S$ (resp. $e_i + e_j\in S$), for some $1\leq i<j\leq n$, draw an arc between $i$ and $j$ (resp. $i$ and $-j$) from the below, and draw an arc between $-i$ and $-j$ (resp. $-i$ and $j$) from the above.
Carrying out this process for all the $n-k$ roots in $S$, we call the resulting graph the \emph{signed partition associated to} $S$ and denote it by $\pi^{\pm}(S)$. For example, when $n=5$ and $S=\{e_1-e_2, e_2-e_3\}$, the corresponding signed partition $\pi^{\pm}(S)$ is given in Figure 5.

\begin{figure}[H]
\centering
\scalebox{0.5}{\includegraphics{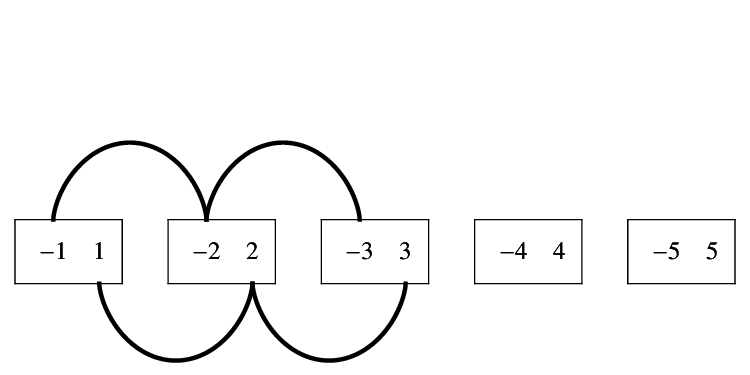}}
\caption{A signed partition for $C_5$}
\end{figure}

We read $\pi^{\pm}(S)$ from the above in the following way: when there is an arc between two signed integers, single them out and join them with an arrow pointing to the right; when there is no arc on a box, single out the positive number in it. For example, reading Figure 5 from the above gives $$-1\rightarrow -2\rightarrow -3, 4, 5.$$
Similarly, we read $\pi^{\pm}(S)$ from the below in the following way: when there is an arc between two signed integers, single them out, swap them, and then join them with an arrow pointing to the right; when there is no arc on a box, single out the negative number in it. For example, reading Figure 5 from the below gives $$3\rightarrow 2\rightarrow 1, -4, -5.$$
We call $3\rightarrow 2\rightarrow 1$ the \emph{negative} of $-1\rightarrow -2\rightarrow -3$, call $-4$ the negative of $4$ etc. Then up to a choice of sign, there are  $3$ \emph{essentially different} ordered parts for the signed partitions in Figure 5. One sees easily that similar things hold in general. Namely, no matter $S$ contains a root of the form $2e_i$ or not,
reading $\pi^{\pm}(S)$ from the above and the below always gives $2k$ ordered parts; moreover, the negative of each part occurs exactly once, and up to a sign, there are  $k$ essentially different ordered parts of $\pi^{\pm}(S)$. As we shall see in the next subsection, $\pi^{\pm}(S)$ is introduced to facilitate the counting of $f(S)$ in the way of Athanasiadis \cite{At1}.

Let $S(\Lambda_n, n-k)$ (resp. $S(T_n, n-k)$) be the number of quasi-antichains of $\Lambda_n$ (resp. $T_n$) with size $k$, which is nonzero only if $0\leq k\leq n$.
The following lemma gives analogs of (\ref{stir-recur-A}) and (\ref{Gamman-identity}).
\begin{lemma}\label{lemma-Tn-Lambdan}
\emph{(i)} For $\Lambda_n$, we have the recurrence
\begin{equation}\label{Lambdan-recurrence}
S(\Lambda_n, k)=S(\Lambda_{n-1}, k-1)+(2k+1) S(\Lambda_{n-1}, k)
\end{equation}
and the identity
\begin{equation}\label{Lambdan-identity}
\sum_{k=0}^{n} S(\Lambda_{n}, k) 2^k (t)_{k}=(2t+1)^{n}.
\end{equation}

\emph{(ii)} For $T_n$, we have the recurrence
\begin{equation}\label{Tn-recurrence}
S(T_n, k)=S(T_{n-1}, k-1)+(2k+2)S(T_{n-1}, k)
\end{equation}
and the identity
\begin{equation}\label{Tn-identity}
\sum_{k=0}^{n} S(T_{n}, k) 2^k (t-1)_{k}=(2t)^{n}.
\end{equation}

\end{lemma}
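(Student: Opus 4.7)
The plan is to first establish the two recurrences combinatorially and then deduce the identities by a short induction. The two halves of the lemma are closely parallel, so I focus on part~(i) and indicate at the end the modifications needed for part~(ii).

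For the recurrence (\ref{Lambdan-recurrence}), I split the quasi-antichains $S$ of $\Lambda_n$ of length $n-k$ by whether they involve $e_n$. Those not involving $e_n$ are precisely the quasi-antichains of $\Lambda_{n-1}$ of length $n-k = (n-1)-(k-1)$, contributing $S(\Lambda_{n-1},k-1)$. For those involving $e_n$, let $T_S = S \cap (\{2e_n\} \cup \{e_i \pm e_n : i<n\})$; the row/column/$L_j$ characterization forces $|T_S| \in \{1,2\}$, and the two-element patterns are necessarily $\{2e_n,e_j-e_n\}$ or $\{e_i+e_n,e_j-e_n\}$ with $i \neq j$ (any other two-element subset shares a row, a column, or an $L_j$). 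I send such an $S$ to a quasi-antichain $S'$ of $\Lambda_{n-1}$ of length $n-k-1$ by the rule: $S' = S \setminus T_S$ when $|T_S|=1$; $S' = (S \setminus T_S) \cup \{2e_j\}$ when $T_S = \{2e_n,e_j-e_n\}$; and $S' = (S \setminus T_S) \cup \{e_i + e_j\}$ when $T_S = \{e_i+e_n,e_j-e_n\}$. A brief check using the characterization shows $S'$ is a quasi-antichain of $\Lambda_{n-1}$.

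The heart of the argument is counting preimages above a fixed quasi-antichain $S'$ of $\Lambda_{n-1}$ of length $(n-1)-k$. Let $r$ be the number of indices $i \in \{1,\dots,n-1\}$ for which $S'$ avoids both row $i$ and column $i$ of $\Lambda_{n-1}$, let $p$ be the number of roots of $S'$ of the form $e_a+e_b$ with $a < b$, and let $z \in \{0,1\}$ indicate whether $S'$ contains some $2e_j$. Then the five possible non-empty forms of $T_S$ listed above contribute respectively $1-z$, $r$, $r$, $z$ and $2p$ preimages, for a total of $1+2(r+p)$. The crucial identity is $r+p=k$: since each row of $\Lambda_{n-1}$ and each of the $n-1$ ``additive'' columns (columns $1$ through $n-1$) contains at most one element of $S'$ by the quasi-antichain property, if $|S'_{\mathrm{add}}|$ denotes the number of roots of $S'$ of the forms $2e_i$ or $e_a+e_b$, then the number of indices $i$ with row~$i$ or column~$i$ of $\Lambda_{n-1}$ meeting $S'$ equals $|S'|+|S'_{\mathrm{add}}|-z$, the overlap $z$ accounting for the fact that each $2e_i \in S'$ occupies both row~$i$ and column~$i$. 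Combining this with $|S'|=n-1-k$ and $p=|S'_{\mathrm{add}}|-z$ gives $r+p=k$, whence each $S'$ has exactly $2k+1$ preimages, which establishes (\ref{Lambdan-recurrence}).

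The identity (\ref{Lambdan-identity}) then follows by induction on $n$: using $[t]_{2k+1}=[t]_{2k-1}(t-2k-1)$ and the recurrence,
$$
\sum_{k} S(\Lambda_n,k)[t]_{2k-1} = \sum_{k} S(\Lambda_{n-1},k)[t]_{2k-1}\bigl[(t-2k-1)+(2k+1)\bigr] = t\cdot t^{n-1} = t^n.
$$
Part~(ii) proceeds identically for $T_n$, viewed as the subdiagram of $\Lambda_{n+1}$ obtained by deleting column $n+1$. The new roots in passing from $T_{n-1}\subset\Lambda_n$ to $T_n\subset\Lambda_{n+1}$ are $\{2e_n\} \cup \{e_i+e_n : i<n\} \cup \{e_i-e_{n+1} : i\leq n\}$; the case analysis is the same except for the additional singleton $T_S^{\mathrm{new}} = \{e_n-e_{n+1}\}$ coming from the new row~$n$, which imposes no constraint on~$S'$ and thereby contributes an extra $+1$ to the preimage count. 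This yields $2k+2$ in place of $2k+1$, establishing (\ref{Tn-recurrence}), and (\ref{Tn-identity}) follows from the analogous inductive step using $[t]_{2k+2}=[t]_{2k}(t-2k-2)$. I expect the main obstacle to be the combinatorial bookkeeping in the two-root cases and the verification of the identity $r+p=k$; the rest of the argument is essentially formal polynomial manipulation.
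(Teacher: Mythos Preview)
Your argument is correct but takes a genuinely different route from the paper's. The paper obtains the recurrence by deleting the \emph{first row} of $\Lambda_n$ (the roots involving $e_1$): the remaining diagram is isomorphic to $\Lambda_{n-1}$, row~$1$ has $2n-1$ boxes, and the key observation is that every box $B_{i,j}$ with $i\geq 2$ forbids exactly two boxes of row~$1$ (one via its column, one via $L_i$ or $L_1$), with no overlaps among the forbidden pairs coming from distinct elements of a quasi-antichain. This yields $2n-1-2(k-1)$ choices for the row-$1$ element directly, with no case analysis. You instead delete the roots involving $e_n$ (two columns rather than one row), which forces the five-way split on $T_S$ and the auxiliary bijection sending two-element $T_S$'s back to single roots of $\Lambda_{n-1}$; the identity $r+p=k$ then does the work that the paper's ``each box blocks two'' observation does in one line. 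Both arguments ultimately exploit the same $L_j$ constraints, and your approach has the virtue of making the $r+p=k$ bookkeeping explicit, but the paper's first-row deletion is considerably shorter. Two small points to tighten: in your inclusion--exclusion for $r+p=k$, the claim that the overlap equals $z$ genuinely requires the $L_j$ condition (row and column alone do not prevent distinct $\alpha\in\text{row }i$ and $\beta\in\text{column }i$), so you should invoke it; and in part~(ii) your assertion that only one new singleton appears is correct, but you should note that the would-be new two-element pattern $\{e_a+e_n,\,e_n-e_{n+1}\}$ is excluded because its difference $e_a+e_{n+1}$ is a root of $C_{n+1}$.
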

\begin{proof}
We only provide the proof for the $\Lambda_n$ case. The $T_n$ case is entirely similar.
 Since the subdiagram obtained from $\Lambda_n$ by deleting the first row is isomorphic to $\Lambda_{n-1}$ as posets, we denote it by $\Lambda_{n-1}$. A quasi-antichain $S$ of $\Lambda_n$  with size $k$ can be formed in two ways: no element of $S$ is chosen in the first row of $\Lambda_n$; one element of $S$ in chosen in the first row of $\Lambda_n$. The first number is $S(\Lambda_{n-1}, n-1-k)$. To figure out the second number, we observe that for any box $B_{i, j}$ of $\Lambda_n$, where $i\geq 2$, thanks to the existence of $L_1, \cdots, L_{n-1}$, there are \emph{always} two elements in the first row of $\Lambda_n$ which can not be chosen to form a quasi-antichain together with $B_{i,j}$. Suppose that we have an arbitrary quasi-antichain $S^{\prime}$ with  size $k-1$ of $\Lambda_{n-1}$ at hand, and we want to add one element $B_{1, j}$ from the first row such that $S^{\prime}\cup\{B_{1, j}\}$ is a quasi-antichain of $\Lambda_n$.  Now for each element in $S^{\prime}$, by the previous observation, there are two candidates in the first row that should be avoided. Moreover, there is no overlapping among these candidates since $S^{\prime}$ is a quasi-antichain. Thus there are exactly $2n+1-2k$ allowable choices for $B_{1, j}$. Therefore, the second number is  $(2n+1-2k)S(\Lambda_{n-1}, n-k)$, and we have
\begin{equation}\label{Lambdan-recurrence-re}
S(\Lambda_n, n-k)=S(\Lambda_{n-1}, n-1-k)+(2n-2k+1) S(\Lambda_{n-1}, n-k),
\end{equation}
(\ref{Lambdan-recurrence}) is a reformulation of (\ref{Lambdan-recurrence-re}).
Finally, (\ref{Lambdan-identity}) follows from  (\ref{Lambdan-recurrence}) and an induction argument.
\end{proof}

\subsection{The formula for $f(S)$}
Let $S$ be a quasi-antichain of $\Delta^{+}$ with size $n-k$. By definition, $f(S)$ equals to the number of $n$-tuples $(x_1, \cdots, x_n)\in\mathbb{F}_p^n$ satisfying  $(H_{\alpha, 1})_p$, for all $\alpha\in S$,  as well as
$x_i\neq 0$, for$1\leq i\leq n$;
$x_i \pm x_j\neq 0$, for $1\leq i<j \leq n$.
Let us adopt the way of Athanasiadis \cite{At1} to count $f(S)$. Namely, we think of each $n$-tuple in $\mathbb{F}_p^n$ as a map from $\pm [n]$ to $\mathbb{F}_p$, sending $i$ to the class $x_i\in\mathbb{F}_p$, and $-i$ to the class $-x_i$. We think of the elements of $\mathbb{F}_p$ as boxes arranged and labeled cyclically with the classes mod $p$. The top box is labeled with the zero class, the clockwise next box is labeled with the class 1 mod $p$ etc. Then the $n$-tuples in $\mathbb{F}_p^n$ become \emph{placements} of the signed integers from $1$ to $n$ in the $p$ boxes, and $f(S)$ counts the number of those satisfying $(H_{\alpha, 1})_p$,  $\forall\alpha\in S$, as well as the following
 \begin{itemize}
\item[(a)] when a signed integer is placed in the class $a$, its negative is placed in the class $-a$;
\item[(b)] the zero class is always empty; and
\item[(c)] distinct signed integers are placed in distinct classes.
 \end{itemize}

\begin{lemma}\label{lemma-fS-type-C}
Let $S$ be a quasi-antichain of $\Delta^{+}$ with size $n-k$. Let $p$ be a large prime, then
$$f(S)=\frac{(p-2n+2k-1)!!}{(p-2n-1)!!}.$$
\end{lemma}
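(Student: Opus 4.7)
My plan is to parametrize solutions of the $S$-constraints via the signed partition $\pi^{\pm}(S)$ and prove the formula by induction on the length $|S|=n-k$. Once a seed $y_j\in\mathbb{F}_p$ is chosen for each below chain $C_j$ of length $s_j$, the $s_j$ positions of the chain are the consecutive classes $y_j,y_j\pm 1,\ldots$ (direction determined by the arc structure); the paired above chain occupies the negations; and if $2e_{i_0}\in S$, then $x_{i_0}=1/2$ is forced. Hence $f(S)$ counts the number of ordered $k$-tuples $(y_1,\ldots,y_k)\in\mathbb{F}_p^k$ whose induced placement $\pm[n]\to\mathbb{F}_p$ is injective, avoids $0$, and (when $2e_{i_0}\in S$) also avoids $\pm 1/2$. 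The base case $|S|=0$ gives $k=n$ with singleton chains, yielding $f(\emptyset)=(p-1)(p-3)\cdots(p-2n+1)$ by picking $x_1,\ldots,x_n$ sequentially.

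For the inductive step, pick any $\alpha\in S$ and set $S':=S\setminus\{\alpha\}$. Removing $\alpha$ increases the chain count to $k+1$, either by splitting one chain in two (when $\alpha=e_i\pm e_j$) or by promoting the singleton $\{i_0\}$ to a new chain (when $\alpha=2e_{i_0}$). The inductive hypothesis then gives $f(S')=(p-2n+2k+1)\prod_{j=1}^{k}(p-2n+2j-1)$, so the task reduces to establishing the recurrence $f(S')=(p-2n+2k+1)\,f(S)$. In Case A ($\alpha=2e_{i_0}$), I factor each count through the remaining $n-1$ variables: $f(S')=(p-2n+1)\cdot g$ and $f(S)=g'$, where $g$ counts the valid $(x_j)_{j\neq i_0}$-placements in $C_{n-1}$ with forbidden set $\{0\}$, and $g'$ is the corresponding count with the augmented forbidden set $\{0,\pm 1/2\}$. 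Applying the formula to both yields $g'/g=(p-2n+1)/(p-2n+2k+1)$ through the telescoping identity $\prod_{j=1}^{k}\frac{p-2n+2j-1}{p-2n+2j+1}=\frac{p-2n+1}{p-2n+2k+1}$, which delivers the recurrence. Case B is analogous: the chain split imposes a linear relation between the two newly separated seeds, and an analogous factoring recovers the same factor.

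The main obstacle is that Case A requires the formula at rank $n-1$ not only for the standard forbidden set $\{0\}$ but also for the augmented set $\{0,\pm 1/2\}$. To handle this I would strengthen the statement to allow arbitrary symmetric odd-sized forbidden sets $F\subseteq\mathbb{F}_p$ containing $0$, with the corresponding count $\prod_{j=1}^{k}(p-2N-|F|+2j)$ (where $N$ is the total below-chain length), and run a simultaneous induction on $n$ and $|S|$ for this strengthened claim; the telescoping identity then applies uniformly. A secondary technical point, needed in Case B, is verifying via a generic-position argument (using the largeness of $p$ and the quasi-antichain property of $S$) that the offset between the split seeds is compatible with the remaining Weyl constraints, so that summing over the "rest-of-placement" configurations yields exactly the factor $p-2n+2k+1$.
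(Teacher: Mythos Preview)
Your inductive strategy and the target recurrence $f(S\setminus\{\alpha\})=(p-2n+2k+1)\,f(S)$ are both correct, but the strengthening you propose to drive the induction is false: the count $\prod_{j=1}^{k}(p-2N-|F|+2j)$ does \emph{not} hold for an arbitrary symmetric odd-sized forbidden set $F\ni 0$. Take $N=2$ variables, the quasi-antichain $S'=\{e_1-e_2\}$ (so $k=1$), and $F=\{0,a,-a\}$ with $a$ generic. Writing $x_1=x_2+1$, the conditions $x_1,x_2\notin F$ and $x_1\neq -x_2$ forbid the seven (generically distinct) values $0,\,a,\,-a,\,-1,\,a-1,\,-a-1,\,-\tfrac12$ for $x_2$, so the count is $p-7$; your formula predicts $p-5$. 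The special choice $a=\tfrac12$ does collapse these seven to five (since $a-1=-\tfrac12$ is already listed), but only because forbidding $\pm\tfrac12$ is the same as reinstating a fixed coordinate $x_{i_0}=\tfrac12$ together with its Coxeter constraints---and that is precisely $f(S)$ in rank $n$. Once the false generalization is removed, your Case~A reduction becomes circular: you need $g'$ to close the recurrence, and $g'=f(S)$ is the quantity you are computing.

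Case~B is likewise not a routine factoring. The assertion that splitting one chain contributes exactly the factor $p-2n+2k+1$, independent of \emph{which} arc is cut and of the lengths of the other chains, is the entire content of the lemma; a ``generic-position'' remark does not supply it, because the forbidden positions for the newly freed seed depend on the full placement of the remaining parts, not just their number. The paper avoids both difficulties by a direct count in Athanasiadis' cyclic model: one places the $k$ essentially different ordered parts of $\pi^{\pm}(S)$, each with a sign choice, on the right half of the $\mathbb{F}_p$-circle, splitting into the cases $2e_i\in S$ versus $2e_i\notin S$ (and in the latter, whether the class $(p-1)/2$ is occupied). In every case the count is a product of the shape $2^k\prod_j\bigl((p-1)/2-n+j\bigr)$, and summing the sub-cases yields $(p-2n+2k-1)!!/(p-2n-1)!!$ with no induction needed.
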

\begin{proof}
For convenience, we call the classes from $1$ to $(p-1)/2$ (resp., from $(p+1)/2$ to $p-1$), both included, the \emph{right half} (resp. \emph{left half}) of the circle.
By the definition of $\pi^{\pm}(S)$, $f(S)$ equals to the number of ways of placing the $2k$ ordered parts of $\pi^{\pm}(S)$ on the circle such that  (a-c)  are met. Namely, we should place them  on the circle such that
\begin{itemize}
\item[$\bullet$] each ordered part is consecutive and clockwise; and there is no overlapping;
\item[$\bullet$] each ordered part is either entirely  on the right half or entirely on the left half; if it is on the right half, then its negative is  on the left half according to (a).
\end{itemize}
Thus we can focus on what happens only on the right half, and it boils down to put the $k$ essentially different ordered parts there, with each part a choice of sign.

Note that there are  two types of $S$.
The  first type is that $S$ contains $2e_i$ for some $i\in [n]$. Then $-i$ is sent to the $(p-1)/2$ class, and $i$ to the $(p+1)/2$ class.
 Since there are $(p-1)/2-n$ empty boxes on the right half circle in total,  the allowable ways to place the $k$ essentially different parts there is
$f(S)=2^k \prod_{j=1}^{k}((p-1)/2-n+j)$, which equals to
$(p-2n+2k-1)!!/(p-2n-1)!!$, as desired.

Now suppose that $S$ does not contain any $2e_i$. Then there are two cases:
\begin{itemize}
\item[(i)] the class $(p-1)/2$ is empty. Therefore, to place the $k$ essentially different parts in the right half circle, we should always avoid the class $(p-1)/2$.  Since besides the class $(p-1)/2$, there are $(p-1)/2-n-1$ empty boxes there in total, the allowable ways are easily seen to be
    $2^k\prod_{j=1}^{k}((p-1)/2-n-1+j)$.
\item[(ii)] the  class $(p-1)/2$ is filled. Then there must be an ordered part of $\pi^{\pm}(S)$ which is placed entirely on the right half circle such that it ends with the class $(p-1)/2$. We have $2k$ different ways to choose this ordered part.
Then we have to place the remaining $k-1$ essentially different ordered parts in the remaining classes of the right half. Since there is a  choice of sign for each of them, and
there are $(p-1)/2-n$ empty boxes on the right half in total, the allowable ways are $(2k)2^{k-1}\prod_{j=1}^{k-1}((p-1)/2-n+j)$.
\end{itemize}
Summing the numbers in (i) and (ii) gives the \emph{same} value of $f(S)$ as in the first type. This finishes the proof.
\end{proof}

\begin{cor}\label{Cor-type-C} Let $p$ be a large prime. For any subset $S\subseteq\Lambda_n$ with cardinality $0\leq k\leq n$, the following are equivalent:
\begin{itemize}
\item[(i)] $S$ is a quasi-antichain of $\Lambda_n$;
\item[(ii)] in each row and column of $\Lambda_n$, and in each $L_j$, $1\leq j\leq n-1$, there is at most one box of $S$;
\item[(iii)] $f(S)$ is nonzero;
\item[(iv)] $f(S)=(p-2k-1)!!/(p-2n-1)!!$.
\end{itemize}
\end{cor}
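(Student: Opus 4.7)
The plan is to establish the cycle of implications $(i)\Rightarrow(iv)\Rightarrow(iii)\Rightarrow(i)$ and invoke the already-recorded characterization of quasi-antichains for $(i)\Leftrightarrow(ii)$. Since all four statements concern the same set $S$, a cyclic proof is the cleanest organization.

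The equivalence $(i)\Leftrightarrow(ii)$ requires no new argument: it is precisely the combinatorial characterization of quasi-antichains stated in the paragraph preceding Lemma \ref{lemma-Tn-Lambdan}, which records that the quasi-antichain condition on $\Lambda_n$ translates exactly into the row/column/$L_j$ incidence restriction. For $(i)\Rightarrow(iv)$, I would simply apply Lemma \ref{lemma-fS-type-C} and reparametrize: that lemma takes the length of $S$ to be $n-k$, so writing $|S|=k$ and substituting $k\mapsto n-k$ in its formula yields $f(S)=(p-2k-1)!!/(p-2n-1)!!$ on the nose. The implication $(iv)\Rightarrow(iii)$ is immediate: for $p$ a large prime and $0\le k\le n$, the expression $(p-2k-1)!!/(p-2n-1)!!$ is a product of $n-k$ positive integers, hence nonzero.

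The only implication that carries any content is $(iii)\Rightarrow(i)$, and this is already essentially executed inside the proof of Lemma \ref{lemma-sum-quasi-antichain}. I would argue the contrapositive: if $S$ fails to be a quasi-antichain, then there exist distinct $\alpha,\beta\in S$ with $\alpha-\beta=c\gamma$ for some positive integer $c$ and some $\gamma\in\Delta^{+}$. Any $v\in\mathbb{F}_p^n$ satisfying both $(H_{\alpha,1})_p$ and $(H_{\beta,1})_p$ must satisfy $(H_{c\gamma,0})_p$; since $p$ is large, $c$ is invertible modulo $p$, forcing $v$ to satisfy $(H_{\gamma,0})_p$ as well. But then $v\in\texttt{Cox}_p$, contradicting the defining condition on the vectors counted by $f(S)$. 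Hence $f(S)=0$, proving the contrapositive.

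There is no real obstacle here; the corollary is a repackaging of Lemma \ref{lemma-fS-type-C} together with the observation from the proof of Lemma \ref{lemma-sum-quasi-antichain}. The only point that deserves a brief word in the write-up is the translation of indices between ``length $n-k$'' in Lemma \ref{lemma-fS-type-C} and ``cardinality $k$'' here, so that the formula in $(iv)$ matches exactly what Lemma \ref{lemma-fS-type-C} gives.
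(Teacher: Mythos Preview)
Your proposal is correct and matches the paper's intent: the paper records this result as an immediate corollary of Lemma~\ref{lemma-fS-type-C} without giving a separate proof, and your cycle $(i)\Leftrightarrow(ii)$, $(i)\Rightarrow(iv)\Rightarrow(iii)\Rightarrow(i)$ spells out exactly the implicit reasoning---the characterization of quasi-antichains from \S5.1, the formula from Lemma~\ref{lemma-fS-type-C} (with the length reindexed from $n-k$ to $k$), and the vanishing argument already used in the proof of Lemma~\ref{lemma-sum-quasi-antichain}.
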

\begin{proof}
It follows from \S 5.1 and Lemma \ref{lemma-fS-type-C}.
\end{proof}

\begin{thm}\label{thm-ShiG-type-C}
Let $\frg$ be $C_n$, $n\geq 2$.
Then for any subset $G\subseteq\Lambda_n$, the characteristic polynomial of $\emph{\texttt{Shi}}(G)$ is given by
$$
\chi(\emph{\texttt{Shi}}(G), t)=\sum_{k=0}^{n} (-1)^k \emph{\texttt{Stir}}(G, n-k)2^{n-k}(\frac{t-1}{2}-k)_{n-k}.
$$
\end{thm}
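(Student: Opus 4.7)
The plan is to combine Lemma \ref{lemma-sum-quasi-antichain} with Lemma \ref{lemma-fS-type-C} and pass from an identity at large primes to an identity of polynomials. First, I would fix a sufficiently large prime $p$ and apply Lemma \ref{lemma-sum-quasi-antichain} to write
\[
\chi(\texttt{Shi}(G), p) = \sum_{\substack{S \subseteq G \\ S \text{ a quasi-antichain of }\Delta^+}} (-1)^{|S|} f(S),
\]
where $f(S)$ is the number of vectors of $\mathbb{F}_p^n - \texttt{Cox}_p$ satisfying every equation $(H_{\alpha,1})_p$ with $\alpha \in S$. This reduces the computation to the finite-field enumeration already carried out in Lemma \ref{lemma-fS-type-C}.

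Next I would invoke Lemma \ref{lemma-fS-type-C}, which tells us that $f(S)$ depends only on $|S|$: explicitly, for a quasi-antichain $S$ of length $k$ (substituting $n-k \mapsto k$ in the statement of the lemma) one has
\[
f(S) = \frac{(p-2k-1)!!}{(p-2n-1)!!} = \prod_{j=k}^{n-1}(p-2j-1).
\]
Grouping the quasi-antichains $S \subseteq G$ by cardinality and using (\ref{Stir-C}), which says $\texttt{Stir}(G, n-k)$ is exactly the number of quasi-antichains of $\Lambda_n$ contained in $G$ of length $k$, yields
\[
\chi(\texttt{Shi}(G), p) = \sum_{k=0}^{n} (-1)^k \, \texttt{Stir}(G, n-k) \prod_{j=k}^{n-1}(p-2j-1).
\]

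Finally, since both sides are polynomials in the indeterminate $t$ that agree at every sufficiently large prime $p$, they agree as polynomials, giving the stated formula. There is no real obstacle to overcome: Lemma \ref{lemma-sum-quasi-antichain} provides the universal alternating-sum decomposition, and Lemma \ref{lemma-fS-type-C} supplies precisely the fact that is special to type $C_n$, namely that $f(S)$ depends only on $|S|$. The only delicate point is minor bookkeeping in the double-factorial rewriting and in converting between the two conventions "length $k$" and "length $n-k$" used in the definition of $\texttt{Stir}(G, \cdot)$; once these are aligned the identity drops out immediately.
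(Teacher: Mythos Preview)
Your proposal is correct and follows exactly the paper's approach: the paper's proof is the single line ``This follows from Lemma \ref{lemma-sum-quasi-antichain} and Lemma \ref{lemma-fS-type-C},'' and you have simply unpacked that sentence, including the polynomial-identity-at-infinitely-many-primes step that is implicit in the use of the finite field method. The bookkeeping you flag (rewriting the double factorial as $\prod_{j=k}^{n-1}(p-2j-1)$ and matching the ``length $k$'' versus ``length $n-k$'' conventions) is handled correctly.
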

\begin{proof}
This follows from Lemma \ref{lemma-sum-quasi-antichain} and Lemma \ref{lemma-fS-type-C}.
\end{proof}
\begin{rmk}\label{rmk-type-B}
Since the positive roots for $B_n$ are
$\{e_i-e_j\mid 1\leq i<j\leq n\}\cup \{ e_i\mid 1\leq i\leq n \}$,
one sees immediately that Lemma \ref{lemma-fS-type-C}, thus the theorem above, holds for $B_n$.
\end{rmk}

\begin{example}\label{exam-shi-cox-C} When $G$ is the empty set, by Theorem \ref{thm-ShiG-type-C}, we have
$$
\chi(\texttt{Cox}, p)=2^{n}(\frac{p-1}{2})_{n}.
$$
When $G=\Lambda_{n}$,  by Theorem \ref{thm-ShiG-type-C},
$$
\chi(\texttt{Shi}, p)=\sum_{k=0}^{n}(-1)^k S(\Lambda_n, n-k)2^{n-k}(\frac{p-1}{2}-k)_{n-k}.
$$
Then using (\ref{Lambdan-identity}) and  a sequence of steps analogous to those in Example \ref{exam-shi-cox-A}, we have
$$\chi(\texttt{Shi}, p)=(p-2n)^n.$$
\end{example}

\subsection{Proof of Theorem \ref{thm-type-C} when $I$ contains $2e_n$}
Fix any subset $I\subseteq\Pi$ with cardinality $1\leq r\leq n-1$ which  contains $2e_n$. As noticed by Righi in \S5.2 of \cite{R1}, the poset
$(C_I, \leq)$ is isomorphic to $T_{n-r}$. See Figure 6 for two examples. Moreover, by Corollary \ref{Cor-type-C}(ii),  $\texttt{Stir}(C_I, n-k)$ equals to the number of quasi-antichains of $T_{n-r}$ with size $k$. Note that the latter is $S(T_{n-r}, n-r-k)$, which is nonzero only if $0\leq k\leq n-r$. Now by
Theorem \ref{thm-ShiG-type-C},
$$
\chi(\texttt{Shi}(I), t)=\sum_{k=0}^{n-r} (-1)^k S(T_{n-r}, n-r-k)(t-2k-1)_{n-k}.
$$
This tells us that $\chi(\texttt{Shi}(I), t)$ is independent of the other $r-1$ simple roots that $I$ contains.
To arrive at (\ref{exp-type-C1}), after some elementary calculations similar to those in Example \ref{exam-shi-cox-C}, it boils down to show
\begin{equation}
\sum_{k=0}^{n-r} S(T_{n-r}, k) 2^k(t-1)_{k}=(2t)^{n-r},
\end{equation}
which is an easy consequence of Lemma \ref{lemma-Tn-Lambdan}(ii).

\subsection{Proof of Theorem \ref{thm-type-C} when $I$ does not contain $2e_n$}
Fix any subset $I\subseteq\Pi$ with cardinality $1\leq r\leq n-1$ which does not contain $2e_n$. This case is similar to the previous one. The only difference is that that now the poset
$(C_I, \leq)$ is  isomorphic to $\Lambda_{n-r}$.  Then replacing $T_{n-r}$ in the previous case by $\Lambda_{n-r}$ and quoting Lemma \ref{lemma-Tn-Lambdan}(i) instead finish the proof.
\hfill\qed

\begin{figure}[H]
\centering
\scalebox{0.55}{\includegraphics{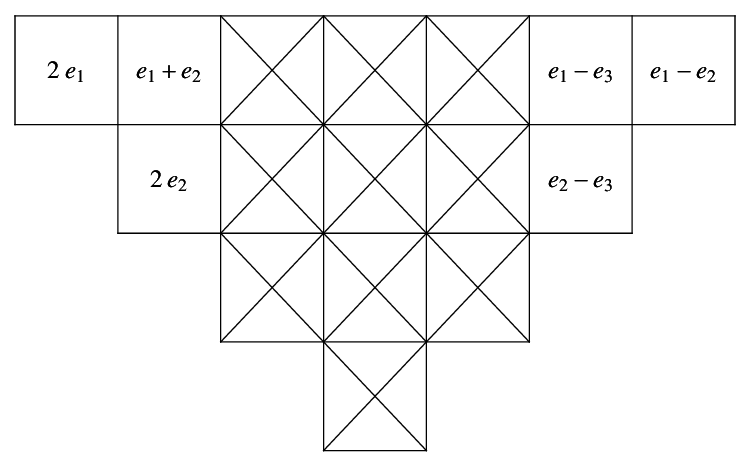}}
\qquad
\scalebox{0.55}{\includegraphics{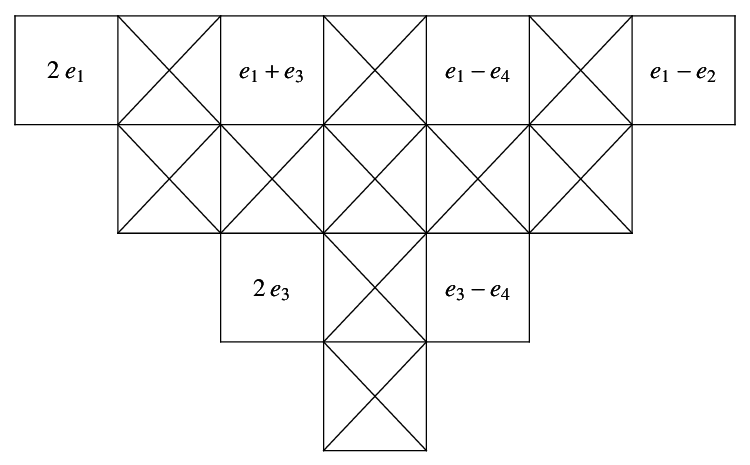}}
\caption{$\frg=C_4$, $I=\{e_3-e_4, 2e_4\}$, $I^{\prime}=\{e_2-e_3, 2e_4\}$}
\end{figure}

\section{Characteristic polynomial of $\texttt{Shi}(G)$: type $D$}

Let $\frg=D_n$, $n\geq 4$. We choose $\Delta^+=\{e_i\pm e_j \mid 1\leq i<j \leq n\}$, then  $\Pi=\{e_1-e_2, e_2-e_3, \cdots,  e_{n-1}-e_n, e_{n-1}+e_n\}$.
They span the real vector space $\mathbb{R}^n$. In this section, we will give a formula for the characteristic polynomial of $\texttt{Shi}(G)$, where $G$ is an arbitrary subset of $\Delta^{+}$. As suggested by Lemma \ref{lemma-sum-quasi-antichain}, this can be achieved by finding a formula for $f(S)$, where $S$ is an arbitrary quasi-antichain of $\Delta^+$ contained in $G$ with size $n-k$.

\begin{example}\label{exam-D} Let us consider $D_4$. Then
\begin{itemize}
\item[$\bullet$] $f(S)=(p-4)(p-5)$ when $S=\{e_1+e_3, e_2-e_3\}$;
\item[$\bullet$] $f(S)=(p-3)(p-5)$ when $S=\{e_1+e_2, e_3+e_4\}$.
\end{itemize}
\end{example}

Thus $f(S)$ \emph{no longer} depends only on $|S|$ for $D_n$.
However, by a more careful analysis, we can still obtain a formula for $f(S)$.
Indeed,
similar to the $C_n$ case, we associate to $S$ a signed partition $\pi^{\pm} (S)$. Reading $\pi^{\pm} (S)$ from the above and the below gives $2k$ ordered parts, where the negative of each part occurs exactly once. Thus up to a sign, there are $k$ essentially different ordered parts. Let $n_1(S)$ be one half of the number of ordered parts with length $1$ in $\pi^{\pm}(S)$. It is necessarily a nonnegative integer.
By definition, $f(S)$ equals to the number of $n$-tuples $(x_1, \cdots, x_n)\in\mathbb{F}_p^n$ satisfying $(H_{\alpha, 1})_p$, for all $\alpha\in S$, as well as $x_i\pm x_j \neq 0$, for $1\leq i < j\leq n$.
Again we adopt the way of Athanasiadis to count \cite{At1}. Then the $n$-tuples in $\mathbb{F}_p^n$ become\emph{ placements} of the signed integers from $1$ to $n$ in the $p$ boxes, and $f(S)$ counts the number of those satisfying $(H_{\alpha, 1})_p$, $ \forall \alpha\in S$, as well as the following
 \begin{itemize}
\item[(a)] when a signed integer is placed in the class $a$, its negative is placed in the class $-a$;
 \item[(b)] there is at most one signed integer placed in each nonzero class;
 \item[(c)] there exists at most one $i\in [n]$ such that both $i$ and $-i$ are placed in the zero class.
 \end{itemize}

\begin{lemma}\label{lemma-fS-type-D}
Let $S$ be a quasi-antichain of $\Delta^{+}$ with size $n-k$. Let $p$ be a large prime, then
$$f(S)=
\frac{(p-2n+2k+1)!!}{(p-2n+1)!!}
-n_1(S) \frac{(p-2n+2k-1)!!}{(p-2n+1)!!},$$
where $n_1(S)$ is one half of the number of ordered parts with length $1$ in $\pi^{\pm}(S)$.
\end{lemma}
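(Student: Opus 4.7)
The plan is to adapt the placement-on-circle analysis of Section~5.2 to the $D_n$ setting, accounting for the new feature that the zero class may accommodate at most one pair $\{i,-i\}$ (by condition~(c)), reflecting the absence of $2e_i$-roots in $D_n$. I would first reformulate $f(S)$ as counting placements of the signed integers $\pm[n]$ in the $p$ cyclically arranged classes of $\mathbb{F}_p$ subject to (a)--(c) and the equations from $S$, then partition these placements according to whether the zero class is empty (Case~A: all $x_i\neq 0$) or occupied by a unique pair $\{i,-i\}$ (Case~B: exactly one $x_i=0$).

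For Case~A, the constraints coincide with those of the $C_n$ analysis in Lemma~\ref{lemma-fS-type-C} in the case $2e_j\notin S$, since in $D_n$ no hyperplane $x_i=0$ is present. Running the right-half placement argument verbatim, with the subcase split on whether class $(p-1)/2$ is filled, would give $f_A(S)=(p-2n+2k-1)!!/(p-2n-1)!!=(p-2n+1)(p-2n+3)\cdots(p-2n+2k-1)$. For Case~B, I would sum over the unique index $i$ with $x_i=0$. An analysis of the symmetric placement about class $0$ reveals two classes of valid choices: each isolated index (one contributing a length-$1$ part to $\pi^{\pm}(S)$) yields a valid choice, for $n_1(S)$ possibilities; and for each nontrivial essentially different ordered part $P=a_1\to\cdots\to a_m$, exactly the two endpoint positions $l=1$ and $l=m$ are compatible, while any interior choice $2\le l\le m-1$ forces identifications $a_{l+r}=-a_{l-r}$ for some $r\neq 0$ and produces a conflict at a nonzero class; this yields $2(k-n_1(S))$ further possibilities, for $2k-n_1(S)$ valid indices in total. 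Once $i$ is fixed, the affected part is placed symmetrically about class $0$ and the remaining $k-1$ essentially different parts are placed on the nonzero classes under the usual constraints; a $C_{n-1}$-style count contributes $(p-2n+2k-1)!!/(p-2n+1)!!$ placements per valid $i$, so
\[
f_B(S)=(2k-n_1(S))\cdot\frac{(p-2n+2k-1)!!}{(p-2n+1)!!}.
\]

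Adding and using $f_A(S)=(p-2n+1)\cdot(p-2n+2k-1)!!/(p-2n+1)!!$ together with the identity $(p-2n+1)+(2k-n_1(S))=(p-2n+2k+1)-n_1(S)$ yields the stated formula after rearrangement. The main obstacle will be the endpoint-versus-interior case analysis in Case~B: one must determine precisely which positions $l$ are compatible with $a_l$ at class $0$, which amounts to studying the overlap $[\max(1-l,l-m),\min(m-l,l-1)]$ of the symmetric positive and negative placements of $P$ and verifying that the nonzero overlap classes force coincidences among the $a_r$ that are excluded by the quasi-antichain hypothesis. A secondary bookkeeping step is to confirm that the per-$i$ count in Case~B is the same for every valid $i$, independent of which nontrivial ordered part it belongs to, so that the sum $f_B(S)$ has the clean closed form above.
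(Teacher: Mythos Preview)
Your proposal is correct and follows essentially the same route as the paper: the identical split into ``zero class empty'' versus ``zero class filled,'' the citation of the $C_n$ count (Lemma~\ref{lemma-fS-type-C}, second case) for Case~A, the tally $2k-n_1(S)$ of admissible ways to occupy the zero class in Case~B, the uniform per-choice count $(p-2n+2k-1)!!/(p-2n+1)!!$ for the remaining $k-1$ parts, and the same final telescoping. The only difference is one of framing: the paper counts, among the $2k$ ordered parts of $\pi^{\pm}(S)$, how many can be placed on the extended right half \emph{starting} at the zero class (length-one parts give one choice, longer parts give two via the part or its negative), whereas you parametrize by the index $i$ with $x_i=0$ and argue that within a nontrivial part only the endpoint positions $l=1$ or $l=m$ avoid the collision $a_{l-r}$ and $-a_{l+r}$ landing in the same nonzero class; these two descriptions are in bijection, since $l=1$ corresponds to placing $P$ at $0,1,\dots$ and $l=m$ corresponds to placing $-P$ at $0,1,\dots$. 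Your endpoint-versus-interior argument in fact supplies a justification the paper leaves implicit, and the observation that the indices in a single part have pairwise distinct absolute values (so $a_{l-r}\neq -a_{l+r}$) is exactly what makes the interior case fail.
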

\begin{proof}
For convenience, we call the zero class combined with the right (resp. left) half the \emph{extended} right (resp. left) half.
By the definition of $\pi^{\pm}(S)$, $f(S)$ equals to the number of placements of the $2k$ ordered parts of $\pi^{\pm}(S)$ on the circle such that  (a-c) are met.
Namely, we should place them on the circle such that
\begin{itemize}
\item[$\bullet$] each ordered part is consecutive and clockwise; and except for in the zero class, there is no overlapping;
\item[$\bullet$] each ordered part is  either entirely  on the extended right half or entirely on the extended left half; if it is placed on the extended right half, then its negative should be placed on the extended left half according to (a).
\end{itemize}
Thus we can focus on what happens only on the extended right half, and it boils down to put the $k$ essentially different ordered parts there, with each part a choice of sign. To deduce an explicit expression for $f(S)$, we note that there are two cases:
\begin{itemize}
\item[(i)] the zero class is empty. Then the number of allowable placements is already counted by Lemma \ref{lemma-fS-type-C}. Namely, it is $(p-2n+2k-1)!!/(p-2n-1)!!$.

\item[(ii)] the zero class is filled. In such a case, if an ordered part with length $1$ is placed in the zero class, then its negative must be placed there as well; while for an ordered part with length greater than $1$, we can place it (or its negative) entirely on the extended right half  starting with the zero class. Thus, there are $2k- n_1(S)$ ways to use an ordered part to fill the zero class. Then it boils down to place the remaining $k-1$ essentially different ordered parts on the right half. Since there are $(p-1)/2-n+1$ empty boxes there in total, similar to the second type of Lemma \ref{lemma-fS-type-C}, one  can count that the latter number is $(p-2n+2k-1)!!/(p-2n+1)!!$. Thus the total number is $(2k-n_1(S))(p-2n+2k-1)!!/(p-2n+1)!!$
\end{itemize}
Summing up the numbers in (i) and (ii) gives the desired expression for $f(S)$.
\end{proof}

\begin{cor}\label{Cor-type-D}
 Let $\frg$ be $D_n$, $n\geq 4$.
 Let $p$ be a large prime.  Let $S\subseteq\Delta^{+}$ be any subset with cardinality $0\leq k\leq n$. Then $S$ is a quasi-antichain of $\Delta^{+}$ if and only if  $f(S)\neq 0$.
\end{cor}

Let $G$ be any subset of $\Delta^{+}$.
We  put
\begin{equation}
\texttt{Stir}_1(G, k)=\sum_S n_1(S),
\end{equation}
where $S$ runs over all the quasi-antichains of $\Delta^+$ contained in $G$ with size $n-k$.

\begin{thm}\label{thm-ShiG-type-D}
Let $\frg$ be $D_n$, $n\geq 4$.
Then for any subset $G\subseteq\Delta^+$, the characteristic polynomial of $\emph{\texttt{Shi}}(G)$ is given by
$$
\chi(\emph{\texttt{Shi}}(G), t)=\sum_{k=0}^{n} (-1)^k
\Big\{
\emph{\texttt{Stir}}(G, n-k) (t-2k+1)
-\emph{\texttt{Stir}}_{1}(G, n-k)
\Big\} 2^{n-1-k}(\frac{t-1}{2}-k)_{n-1-k}.
$$
\end{thm}
\begin{proof}
This follows from Lemma \ref{lemma-sum-quasi-antichain} and Lemma \ref{lemma-fS-type-D}.
\end{proof}

\centerline{\scshape Acknowledgements} The research is supported by
the National Natural Science Foundation of China grant 11201261. In the
process of carrying out this work, the author had more than a few
conversations with S.-J.~Wang about hyperplane arrangements. I
would like to thank him heartily for sharing knowledge with me.
Two anonymous referees gave me many suggestions. In particular, the entire subsection 3.4 was motivated by a question of one referee, and the introduction was reorganized according to the suggestion of another referee. I thank them sincerely. Finally, the author is deeply indebted to the editors.

\end{document}